\definecolor{newGreen}{HTML}{4DAF4A}
\newcommand{\ds}{\displaystyle}
\renewcommand{\O}{\Omega}
\newcommand{\bb}[1]{\left\llbracket#1\right\rrbracket}
\newcommand{\abs}[1]{\left|#1\right|}
\newcommand{\norm}[1]{\left\lVert#1\right\rVert}
\newcommand{\p}{\partial}
\newcommand{\vertiii}[1]{{\left\vert\kern-0.25ex\left\vert\kern-0.25ex\left\vert #1 
    \right\vert\kern-0.25ex\right\vert\kern-0.25ex\right\vert}}
\newcommand{\R}{\mathcal{R}}
\newcommand{\T}{\mathcal{T}}
\newcommand{\N}{\mathcal{N}}
\newcommand{\B}{\mathcal{B}}
\newcommand{\V}{\mathcal{V}}
\newcommand{\ttau}{\bm{\tau}}
\renewcommand{\c}{\mathbf{c}}
\newcommand{\x}{\mathbf{x}}
\renewcommand{\b}{\mathbf{b}}
\newcommand{\g}{\mathbf{g}}
\newcommand{\n}{\mathbf{n}}
\newcommand{\hp}{\hat{p}}
\newcommand{\hq}{\hat{q}}
\newcommand{\hV}{\hat{\V}}
\newcommand{\hphi}{\hat{\phi}}
\newcommand{\hx}{\hat{\x}}
\newcommand{\hu}{\hat{u}}
\newcommand{\hGamma}{\hat{\Gamma}}
\newcommand{\hbeta}{\hat{\beta}}
\newcommand{\hK}{\hat{K}}
\renewcommand{\hbeta}{\hat{\beta}}
\newcommand{\cw}{\check{w}}
\newcommand{\tC}{\tilde{C}}
\newcommand{\PG}{P_{\Gamma}}
\newcommand{\RG}{R_{\Gamma}}
\newcommand{\ximid}{\xi^{\mathtt{mid}}}
\newcommand{\czeta}{\check{\zeta}}
\newcommand{\Nq}{\mathtt{N}^{(1)}_\mathtt{q}} 
\newcommand{\NTq}{\mathtt{N}_\mathtt{q}} 
\newcommand{\A}{\mathbf{A}}
\newcommand{\hlambda}{\hat{\lambda}}
\newcommand{\hB}{\hat{\B}}
\renewcommand{\L}{\mathscr{L}}
\newcommand{\E}{\mathcal{E}}
\DeclareMathOperator*{\argminA}{arg\,min}
\newcommand{\commentout}[1]{{}} 
\newcommand{\MATLAB}{\textsc{Matlab}\xspace}
\theoremstyle{definition}
\newtheorem{lemma}{Lemma}
\def\verbatim@font{\mlttfamily}
\title{{Construction of Basis Functions for \\
the Geometry Conforming Immersed Finite Element Method}}
\author{Slimane Adjerid\thanks{Department of Mathematics,  Virginia
Tech, Blacksburg, VA 24061 USA (adjerids@vt.edu, tlin@vt.edu)}
\and Tao Lin\footnotemark[1]\and
Haroun Meghaichi\thanks{Department of Mathematics, The Ohio State
University Columbus, OH 43210 USA (meghaichi.1@osu.edu)} }
\begin{document}
\maketitle

\begin{abstract}
The Frenet apparatus is a new framework for constructing high
order geometry-conforming immersed finite element functions for
interface problems. In this report, we present  a procedure for  constructing
the local IFE bases in some detail as well as  a new approach for constructing
orthonormal bases using the singular value decomposition of the local
generalized
Vandermonde matrix. A sample implementation in \MATLAB is provided to showcase
the simplicity and extensionability of the framework.
\end{abstract}


\section{Introduction}\label{sec:intro}

In this report, we discuss the implementation and the computational aspects
of the geometry-conforming immersed finite element (GC-IFE) functions
that were introduced in \cite{adjeridHighOrderGeometry2024} for
solving the following elliptic interface problem
\begin{subequations}\label{eqns:interface_problem}
\begin{equation}
\begin{cases}
-\nabla\cdot (\beta\nabla u )=f,& \text{on } \Omega^{-}\cup\Omega^+, \\
u|_{\p \Omega}=0.
\end{cases}\label{eqn:PDE}
\end{equation}
Here, the computational domain $\Omega \subset \mathbb{R}^2$ is
split by the interface $\Gamma$ into two subdomains $\Omega^-$ and
$\Omega^+$ and the diffusion coefficient $\beta$ is a piecewise
constant function  with $\beta|_{\Omega^{\pm}}=\beta^{\pm}>0$.
Furthermore, it is assumed that the solution satisfies the
homogeneous interface conditions
\begin{equation}
\bb{u}_{\Gamma}=0,\qquad \bb{\beta \nabla u\cdot \n}_{\Gamma}=0,
\label{eqn:interface_conds}
\end{equation}
where $\bb{v}_{\Gamma}$ denotes the \textit{jump} of $v$ across the
interface $\Gamma$, i.e.,
$\bb{v}_{\Gamma}=v^{+}|_{\Gamma}-v^{-}|_{\Gamma}$ where
$v^{\pm}=v|_{\O^{\pm}}$. When higher degree polynomials are
employed in the IFE functions, we also assume the so-called
extended jump condition:
\begin{equation}
\bb{\beta \p_{\n^j}\Delta u}_{\Gamma}=0,\quad j=0,1,\dots,m-2.
\label{eqn:interface_conds_ext}
\end{equation}
\end{subequations}

In our discussion, the solution domain $\Omega$ is assumed to be
a finite union of rectangles, and is partitioned into
an interface-independent uniform mesh $\T_h$ of rectangular elements
with {diameter} $h$. We call an element $K\in \T_h$ an interface
element if $\overset{\circ}{K}\cap \Gamma \neq \emptyset$, where
$\overset{\circ}{K}$ is the interior of $K$; otherwise, we call $K$ a
non-interface element. We use $\T_h^i$ and $\T_h^n$ to denote the set
of interface elements, and the set of non-interface elements,
respectively. In addition, we denote the set of edges, interior edges
and boundary edges of $\T_h$ by $\E_h,~\E_h^\circ$ and $\E_h^b$,
respectively. An IFE method will use standard finite element
functions on all non-interface elements, and it will use IFE functions
constructed according to the jump conditions
over all interface elements.

The GC-IFE functions are a new class of immersed finite element (IFE)
functions characterized by two distinct features:
\begin{enumerate} [label=(\arabic*)] 
\item
They can be constructed using polynomials of any desired degree;

\item
They exactly satisfy the jump conditions within each interface element.
\end{enumerate}
A key idea for constructing GC-IFE functions is the use of a
so-called Frenet transformation that relates the physical coordinates of a point
$X = (x, y)\in\O$ in a neighborhood of the interface curve $\Gamma$,
and the local coordinates $(\eta, \xi)$, which are defined  using the
Frenet apparatus. To be specific and without loss of
generality, let the interface curve $\Gamma$ be a regular curve
parametrized by $\g=(g_1(\xi),g_2(\xi)): [\xi_s,\xi_e]\to
\mathbb{R}^2$. The Frenet apparatus of $\Gamma$ consists of the
tangent vector $\ttau(\xi)$, the normal vector $\n(\xi)$, and the curvature
$\kappa(\xi)$, by which we introduce the Frenet transformation:
$P_\Gamma: \mathbb{R}^2 \rightarrow \mathbb{R}^2$ defined as follows
\begin{align}
\begin{bmatrix}
\eta \\
\xi
\end{bmatrix} \stackrel{P_\Gamma}{\longrightarrow}
\begin{bmatrix}
x(\eta,\xi) \\
y(\eta,\xi)
\end{bmatrix} = \x(\eta,\xi) =\PG(\eta,\xi)= \g(\xi)+\eta \n(\xi).
\label{eq:P_map}
\end{align}
It is well-known \cite{abateCurvesSurfaces2012} that $\Gamma$ has a
tubular neighborhood with a half-band width $\epsilon > 0$ denoted by
\begin{eqnarray}
N_\Gamma(\epsilon) = P_\Gamma([-\epsilon, \epsilon] \times [\xi_s,
\xi_e]) \label{eq:tubular}
\end{eqnarray}
in which the transformation $P_\Gamma(\eta, \xi): [-\epsilon,
\epsilon]\times [\xi_s, \xi_e] \rightarrow N_\Gamma(\epsilon)$ is
one-to-one and onto.
Therefore, $P_\Gamma(\eta, \xi)$ has the inverse $R_\Gamma:
N_\Gamma(\epsilon) \rightarrow [-\epsilon, \epsilon]\times [\xi_s,
\xi_e]$ such that
\begin{eqnarray}
\begin{bmatrix}
\eta \\
\xi
\end{bmatrix} =
\begin{bmatrix}
\eta(x, y) \\
\xi(x, y)
\end{bmatrix} = R_\Gamma(x, y)  = P_\Gamma^{-1}(x, y) \in
[-\epsilon, \epsilon]\times [\xi_s, \xi_e],~~\forall X = (x, y) \in
N_\Gamma(\epsilon). \label{eq:P_inv}
\end{eqnarray}
Consequently, for a mesh $\T_h$ with $h$ small enough, all the
interface elements are inside the $\epsilon$-tubular neighborhood of
$\Gamma$. Moreover, for each interface element $K \in \T_h^i$,
there exists a rectangle $\hK_F = [-h, h] \times [\xi_{1,K},
\xi_{2,K}]$ with $\xi_{1, K}, \xi_{2, K} \in [\xi_s, \xi_e]$ such
that its image $K_F = P_\Gamma(\hK_F)$ under the Frenet transformation contains
$K$. Geometrically, $K_F$ is a curved trapezoid containing $K$, with
two curved edges parallel to
the interface curve $\Gamma$ and two straight edges. By the inverse
mapping, the interface element $K$ becomes the curved quadrilateral
$\hK = R_{\Gamma}(K) \subset \hK_F$. Very critically, the interface
curve segment $\Gamma_{K_F} = \Gamma \cap K_F$ is mapped to the
vertical line segment $\hGamma_{K_F} = R_\Gamma(\Gamma_{K_F})$ and
the interface jump conditions specified in
\eqref{eqn:interface_conds} and \eqref{eqn:interface_conds_ext} become
\begin{eqnarray}
\bb{\hu}_{{\hat \Gamma}_{K_F}} = 0, ~\bb{ \hbeta \hu_\eta }_{{
\hat{\Gamma}}_{K_F}} = 0,
~\bb{ \hbeta \frac{\partial^j}{\partial \eta^j}\mathscr{L}(\hu)
}_{{\hat \Gamma}_{K_F}} = 0,~
j = 0, 1, 2, \cdots, m-2, \label{eqn:interface_conds_eta-xi}
\end{eqnarray}
where $\hbeta(\eta, \xi) = \beta(x(\eta, \xi), y(\eta, \xi))$ and
$\hu(\eta, \xi) = u(x(\eta, \xi), y(\eta, \xi))$ are defined by the
Frenet transformation \eqref{eq:P_map} and $\mathscr{L}$ is the
Laplacian with respect to the local coordinate $\eta$-$\xi$.
For every interface element $K \in \T_h$, following
\cite{adjeridHighOrderGeometry2024}, we will call
$\hK$ the Frenet interface element of $K$, $\hK_F$ the Frenet
fictitious element of $K$, $K_F$ the fictitious element of $K$, and
$\hGamma_{K_F}$ the Frenet interface of $K$, and their illustrations
are given in
\autoref{fig:intf_fic_elements}. Without loss of
generality, we have assumed that
the parametrization $\g(\xi)$ of the interface curve $\Gamma$ is
such that the normal $\n(\xi)$ points
from $\Omega^-$ to $\Omega^+$ at every point $X = \g(\xi) \in
\Gamma$. Also, we have assumed that
the value of $\hbeta$ on the right-hand side of $\hGamma_{K_F}$ is
$\beta^+$, and the value of $\hbeta$ on the left-hand side of
$\hGamma_{K_F}$ is $\beta^-$.

\begin{figure}[ht]
\centerline{
\includegraphics[scale=1]{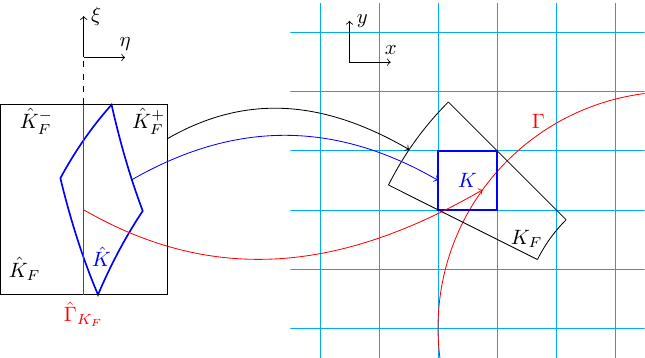}
\hspace{0.4in}
}
\caption{An interface element $K$ and its associated sets $\hK_F,
K_F$ and $\hGamma_{K_F}$.}
\label{fig:intf_fic_elements}
\end{figure}

The fact that $\hGamma_{K_F}$ is a straight line segment greatly
simplifies the construction of IFE functions on $\hK_F$ with
polynomials with a degree of our choice. In fact, it has been proved
in \cite{adjeridHighOrderGeometry2024} that the following IFE space in
local coordinates is well-defined:
\begin{eqnarray}
{\hat \V}^m_{\hbeta}(\hK_F) &=&\left\{\hphi:\hK_F \rightarrow \mathbb{R}\mid
\hphi|_{\hK_F^{\pm}} \in \mathbb{Q}_m~~\text{ and } ~\hphi \text{
satisfies \eqref{eqn:interface_conds_eta-xi}} \right\}
\label{eq:LocalIFESpace_etaxi}
\end{eqnarray}
where $\mathbb{Q}_m$ is the space of tensor-product polynomials of
degree $m$ in each variable. Consequently, the local GC-IFE space on
the interface element $K$ is defined by
\begin{eqnarray}
\V^m_{\beta}(K)=\left\{\hphi\circ(R_\Gamma)|_{K} \mid \hphi \in
{\hat \V}^m_{\hbeta}(\hK_F)\right\}.
\label{eq:LocalIFESpace_xy}
\end{eqnarray}
Reference \cite{adjeridHighOrderGeometry2024} proves that functions
in $\V^m_{\beta}(K)$ precisely satisfy the interface jump conditions
\eqref{eqn:interface_conds}, whereas all other IFE functions
previously proposed in the literature can only satisfy these
conditions approximately. Moreover,
it has been proved in \cite{adjeridHighOrderGeometry2024} that the
GC-IFE space $\V^m_{\beta}(K)$ possesses
the optimal approximation capability in the sense that the $L^2$
projection of a function $u$ unto $\V^m_{\beta}(K)$ can approximate
$u$ optimally with respect to the underlying polynomial space
$\mathbb{Q}_m$, provided that $u$ is piecewisely smooth enough and
satisfies jump
conditions \eqref{eqn:interface_conds} and \eqref{eqn:interface_conds_ext}. The
satisfaction of jump condition \eqref{eqn:interface_conds} by GC-IFE
functions in
$\V^m_{\beta}(K)$ ensures that they belong to $H^1(K)$. This property allows the
natural employment of the GC-IFE space $\V^m_{\beta}(K)$ in an
optimally convergent
IFE method based on a standard DG formulation
\cite{adjeridHighOrderGeometry2024} to solve the interface problem
\eqref{eqns:interface_problem}.

For solving the interface problem \eqref{eqns:interface_problem}, we
need a basis for
the local GC-IFE space $\V^m_{\beta}(K)$ and use functions in this
basis to assemble matrices and vectors
on each interface element $K \in \T_h^i$ as required by a chosen
finite element method.
The current article expands on the preliminary concepts introduced in
\cite{adjeridHighOrderGeometry2024} and
\cite{meghaichiHigherOrderImmersed2024} by proposing a construction
procedure for generating a GC-IFE basis. This basis not only
satisfies interface jump conditions but also improves the stability
and accuracy of approximations for interface problems. The
construction procedure to be presented consists of two stages: an
initial construction stage and a reconstruction stage. The initial
construction builds upon and extends the ideas in
\cite{adjeridHighOrderGeometry2024} and
\cite{meghaichiHigherOrderImmersed2024} to develop a GC-IFE basis
that satisfies specified jump conditions. In the reconstruction
stage, we refine the initial basis to produce an orthonormal GC-IFE basis with
respect to the $L^2$ inner product.

We will address key computations and implementations in the proposed
construction which are not common in conventional finite element
methods. Sample \MATLAB functions will be provided mainly for
illustration. Their efficiency can most probably be improved even
though the efficiency was in consideration. We believe the
discussions in this article can facilitate the adoption of the
recently introduced,
feature-rich GC-IFE functions for solving interface problems where
the partial differential equations involve the second order elliptic
operator, such as the elliptic equation in the interface problem
\eqref{eqns:interface_problem} as well as the heat and wave
equations. Furthermore, the framework and methodology outlined here
may offer insights into extending the GC-IFE approach to more complex
interface problems involving systems of PDEs, including, but not
limited to, linear elasticity, the Stokes system, and coupled
acoustic-elastic models.

The rest of this article is organized as follows.
\autoref{sec:prelim} is about essential calculus formulas in terms of the
local coordinates. While most of the calculus formulas to be
presented are from \cite{adjeridHighOrderGeometry2024} and
\cite{meghaichiHigherOrderImmersed2024}, we will discuss computation
details about the Frenet transformation between the physical
coordinates $X = (x, y)$ and the local coordinates
based on the differential geometry of the interface. Sample \MATLAB
functions are given for illustrating how to implement these
transformations.
\autoref{sec:local_space} is the core of this
article, and it is about the computational details and
implementation issues for the initial construction of a GC-IFE basis
using the procedure originally developed in
\cite{adjeridHighOrderGeometry2024} and
\cite{meghaichiHigherOrderImmersed2024}. The second part is a
generalized initial construction based on an extension idea
\cite{adjeridFrenetImmersedFinite2025,
guoDesignAnalysisApplication2019, guoHigherDegreeImmersed2019} which
forms an IFE basis function by choosing a polynomial on one side of
the interface and extending it to other side according to the jump
conditions across the interface.
The third part is about the
reconstruction procedure which transforms the initially constructed
basis into an
orthonormal basis efficiently and  two approaches for carrying out the
orthonormalization are presented.
\autoref{sec:NumExample}
presents a numerical example that demonstrates how computations
involving GC-IFE basis functions are performed, including
preprocessing and local matrix assembly.
\MATLAB functions for
key computations in the construction procedure
will be presented and explained throughout the manuscript.

\section{Calculations in local variables} \label{sec:prelim}

We can transform a function $u(x,y)$ defined in physical coordinates
$(x,y)$ into a function $\hu(\eta, \xi) = u(x(\eta, \xi), y(\eta,
\xi))$ expressed in local coordinates $(\eta, \xi)$ near the
interface $\Gamma$. These local coordinates $(\eta, \xi)$ are
intrinsically linked to the geometry of $\Gamma$, with $\eta(x, y)$,
for example, indicating the distance of $\x = (x, y)$ from $\Gamma$.
Since $(\eta, \xi)$ constitutes an orthogonal curvilinear coordinate
system in the neighborhood of $\Gamma$, standard calculus operations
with respect to these coordinates are readily available in textbooks.
However, for the reader's ease of reference and for their application
in later sections, we provide a set of essential calculus formulas in
this preliminary section. Most of the formulas to be presented are
from \cite{adjeridHighOrderGeometry2024} and
\cite{meghaichiHigherOrderImmersed2024}.

\subsection{Calculus in terms of the local coordinates}

We assume that the interface curve $\Gamma$ has a regular parametrization:
\begin{eqnarray}
\g=
\begin{bmatrix}
g_1 \\
g_2
\end{bmatrix}:[\xi_s,\xi_e]\to \mathbb{R}^2. \label{eq:g_formula}
\end{eqnarray}
Then, the Frenet apparatus of $\Gamma$ is  defined by
\begin{eqnarray}
&&\ttau(\xi) = \frac{1}{\norm{\g'(\xi)}} \g'(\xi), ~\n(\xi) =
Q\ttau(\xi) \text{~~with~~} Q =
\begin{bmatrix}
0 & 1 \\
-1 & 0
\end{bmatrix}, \label{eq:tangent_normal_vectors} \\
&&\kappa(\xi)=\norm{\g'(\xi)}^{-3}\det\big[\g'(\xi),\ \g''(\xi)\big].
\label{eq:curvature}
\end{eqnarray}
By the well-known Frenet-Serret formulas
\cite{abbenaModernDifferentialGeometry2006} for the tangent
and normal vectors:
\begin{equation}
\ttau'(\xi)=-\kappa(\xi)\norm{\g'(\xi)}\n(\xi),\qquad
\n'(\xi)=\kappa(\xi)\norm{\g'(\xi)}\ttau(\xi).
\label{eqn:frenet-serret}
\end{equation}
By \eqref{eq:tangent_normal_vectors} and \eqref{eqn:frenet-serret},
we can derive the following formula for the Jacobian of the Frenet
transformation $P_\Gamma(\eta, \xi)$:
\begin{eqnarray}
J_{P_\Gamma}(\eta, \xi) &=&
\begin{bmatrix} \n(\xi), ~~\g'(\xi)+\eta\n'(\xi)
\end{bmatrix}
=
\begin{bmatrix} \n(\xi), ~~\norm{\g'(\xi)}\left(1+\eta \kappa(\xi)
\right)\ttau(\xi)
\end{bmatrix}. \label{eq:Jacobian_P_map}
\end{eqnarray}
Then, by the inverse function theorem, we have the following formula
for the Jacobian of the inverse
Frenet transformation
\begin{eqnarray}
J_{R_\Gamma}(x, y) &=&
\begin{bmatrix}
\n(\xi)^T\\ \rho(\eta, \xi)\ttau(\xi)^T
\end{bmatrix}, \label{eq:Jacobian_R_map}
\end{eqnarray}
with
\begin{eqnarray}
\rho(\eta, \xi) = \norm{\g'(\xi)}^{-1}\psi(\eta, \xi), ~~\psi(\eta,
\xi) = \frac{1}{1+\eta \kappa(\xi)}.
\label{eq:psi_rho}
\end{eqnarray}
By \eqref{eq:P_inv} and the definition of $J_{R_\Gamma}(x, y)$ we can see that
\begin{equation}
\nabla \eta(\x) =\n(\xi),\qquad \nabla \xi(\x) =
\norm{\g'(\xi)}^{-1}\psi(\eta,\xi)\ttau(\xi),
~~\forall ~ { \x} \in N_\Gamma(\epsilon). \label{eqn:grad_eta_xi}
\end{equation}
By \eqref{eqn:grad_eta_xi} and the chain rule, we have the following
formula for the gradient of
$u(\x) = u(x, y) = \hu(\eta(x, y), \xi(x, y))$:
\begin{eqnarray}
\nabla u(\x) &=& \hu_\eta(\eta(x, y), \xi(x, y)) \nabla \eta(x, y)
+ \hu_\xi(\eta(x, y), \xi(x, y)) \nabla \xi(x, y)  \nonumber \\
&=& \hu_{\eta}(\eta,\xi) \n(\xi) + \hu_{\xi}(\eta,\xi)\rho(\eta,
\xi)\ttau(\xi), \label{eqn:grad_u}
\end{eqnarray}
which expresses the gradient $\nabla u(\x)$ in terms of the normal
vector $\n(\xi)$ and the tangential vector $\ttau(\xi)$.
\noindent
For the divergence of the normal vector $\n(\xi)$ and the tangential
vector $\ttau(\xi)$, we have
\begin{eqnarray}
\nabla \cdot \n(\xi(\x)) &=& \kappa(\xi)\psi(\eta, \xi),
~\nabla\cdot \ttau(\xi(\x)) = 0.
\label{eqn:div_eta_xi}
\end{eqnarray}
For a generic vector function $\mathbf{f}: N_\Gamma(\epsilon) \to
\mathbb{R}^2$ expressed in
terms of $\n(\xi)$ and $\ttau(\xi)$
\begin{eqnarray*}
\mathbf{f}(\x) = f^{\n}(\x)\n(\xi(\x))
+f^{\ttau}(\x)\ttau(\xi(\x)),~~\forall ~{ \x} \in N_\Gamma(\epsilon),
\end{eqnarray*}
by \eqref{eqn:grad_u} and \eqref{eqn:div_eta_xi}, we have
\begin{eqnarray}
\nabla \cdot \mathbf{f}(\x) &=& (\nabla f^{\n}(\x))\n(\xi(\x)) +
f^{\n}(\x)(\nabla \cdot \n(\x))
+ (\nabla f^{\ttau}(\x))\ttau(\xi(\x)) + f^{\ttau}(\x)(\nabla \cdot
\ttau(\x))  \nonumber \\
&=& \hat{f}^{\n}_{\eta}(\eta,\xi) + \kappa(\xi)\psi(\eta,\xi)
\hat{f}^{\n}(\eta,\xi) + \rho(\eta, \xi)\hat{f}^{\ttau}_{\xi}(\eta,\xi),
~~\forall ~{ \x} \in N_\Gamma(\epsilon).
\label{eqn:div_f}
\end{eqnarray}
Consequently, by \eqref{eqn:grad_u} and \eqref{eqn:div_f}, we have
the following formula for
$\Delta u(\x)$:
\begin{subequations}  \label{eqn:laplacian_form}
\begin{eqnarray}
\Delta u(\x) &=& \nabla\cdot (\nabla u(\x)) = \L(\hu(\eta, \xi))
~~\text{with}\nonumber \\
\L(\hu(\eta, \xi)) &:=& \hu_{\eta\eta}(\eta,\xi)+ J_0(\eta,\xi)
\hu_{\xi\xi}(\eta,\xi) + J_1(\eta,\xi)\hu_{\eta}(\eta,\xi) +
J_2(\eta,\xi)\hu_{\xi}(\eta,\xi), \label{eqn:laplacian_formula}
\end{eqnarray}
where
\begin{equation}
\begin{split}
J_0(\eta,\xi) &=\rho^2(\eta, \xi),\qquad J_1(\eta,\xi)
=\kappa(\xi)\psi(\eta,\xi), \\
J_2(\eta,\xi) &=-\rho^2(\eta, \xi)\left(\eta
\kappa'(\xi)\psi(\eta,\xi)+\frac{\g'(\xi)\cdot
\g''(\xi)}{\norm{\g'(\xi)}^{2}}\right),
\end{split} \label{eqn:def_of_Js}
\end{equation}
\end{subequations}
and, hereinafter, we tacitly assume that the parametrization
$\g(\xi)$ of the interface curve $\Gamma$ is $C^3$ whenever the term
$\L(\hu(\eta, \xi))$ is involved.

We now consider the jump conditions in local coordinates $(\eta,
\xi)$. By the Frenet transformation $\hu(\eta, \xi) = u(x(\eta, \xi),
y(\eta, \xi))$, we can see that the first jump condition in
\eqref{eqn:interface_conds} leads to
\begin{eqnarray*}
\bb{\hu}_{{\hat \Gamma}_{K_F}} = \bb{u}_{\Gamma_{K_F}} = 0.
\end{eqnarray*}
From \eqref{eqn:grad_u}, we have $\nabla u(\x) \cdot \n =
\hu_{\eta}(\eta,\xi)$; hence, the second jump condition implies
\begin{eqnarray*}
\bb{ \hbeta \hu_\eta }_{{\hat \Gamma}_{K_F}} = \bb{\beta \nabla
u\cdot \n}_{\Gamma_{K_F}}=0,
\end{eqnarray*}
and we note that the normal derivative $\nabla u\cdot \n$ along the
curve $\Gamma$ is transformed to
the simpler partial derivative $\hu_\eta$ on the $\xi$-axis.
Similarly, by the gradient operation given in \eqref{eqn:grad_u} and
the Laplacian in  \eqref{eqn:laplacian_form} we can see that the
extended jump condition \eqref{eqn:interface_conds_ext} suggests
\begin{eqnarray*}
\bb{ \hbeta \frac{\partial^j}{\partial \eta^j}\mathscr{L}(\hu)
}_{{\hat \Gamma}_{K_F}} =
\bb{\beta \p_{\n^j}\Delta u}_{\Gamma_{K_F}} = 0, ~ j = 0, 1, 2, \ldots, m.
\end{eqnarray*}
Therefore, in the Frenet fictitious element $\hK_F$, $u(\x) =
\hu(\eta, \xi)$ should satisfy the jump conditions given in
\eqref{eqn:interface_conds_eta-xi}.

The construction of the GC-IFE basis functions involves computations
specified by the jump conditions. Because ${\hat \Gamma}_{K_F}$ is a
line segment along the $\xi$-axis in the local coordinate system,
the computations for the first two jump conditions in
\eqref{eqn:interface_conds_eta-xi} are trivial:
\begin{eqnarray*}
\bb{\hu}_{{\hat \Gamma}_{K_F}} = \hu^+(0, \xi) - \hu^-(0, \xi), ~~
\bb{ \hbeta \hu_\eta }_{{\hat \Gamma}_{K_F}} = \hbeta^+
\hu_\eta^+(0, \xi) - \hbeta^- \hu_\eta^-(0, \xi),
~(0, \xi) \in {\hat \Gamma}_{K_F},
\end{eqnarray*}
with $\hu^s = \hu|_{\hK_F^s}, ~s = \pm$, see the illustration in
\autoref{fig:intf_fic_elements}.
Computations for the more complex extended jump conditions benefits
even more from the simple geometry and the location of the
transformed interface ${\hat \Gamma}_{K_F}$. First,
\begin{eqnarray*}
\bb{ \hbeta \frac{\partial^j}{\partial \eta^j}\mathscr{L}(\hu)
}_{{\hat \Gamma}_{K_F}} =
\hbeta^+ \frac{\partial^j}{\partial \eta^j}\mathscr{L}(\hu^+(0, \xi)) -
\hbeta^- \frac{\partial^j}{\partial \eta^j}\mathscr{L}(\hu^-(0, \xi))
\end{eqnarray*}
with
\begin{eqnarray}
\begin{aligned} \label{eq:eta_d_L}
&\frac{\partial^j}{\partial \eta^j} \L(\hu^s)(0,\xi) =
\frac{\partial^{j+2}\hu^s(0,\xi)}{\partial\eta^{j+2}} \\
& +\sum_{i=0}^{j}\binom{j}{i}\left(
\frac{\partial^{i}J_0(0,\xi)}{\partial\eta^{i}}
\frac{\partial^{j-i} \hu^s_{\xi\xi}(0,\xi)}{\partial\eta^{j-i}}+
\frac{\partial^{i}J_1(0,\xi)}{\partial\eta^{i}}
\frac{\partial^{j-i+1}\hu^s(0,\xi)}{\partial\eta^{j-i+1}}+
\frac{\partial^{i}J_2(0,\xi)}{\partial\eta^{i}}
\frac{\partial^{j-i} \hu^s_{\xi}(0,\xi)}{\partial\eta^{j-i}}\right)
\end{aligned}, ~~~s = \pm.
\end{eqnarray}
In the construction of GC-IFE basis functions, $\hu^s$ in
\eqref{eq:eta_d_L} will be polynomials chosen form $Q_m$; hence,
their derivatives
\begin{eqnarray*}
\frac{\partial^{j+2}\hu^s(0,\xi)}{\partial\eta^{j+2}},
~\frac{\partial^{j-l} \hu^s_{\xi\xi}(0,\xi)}{\partial\eta^{j-l}},
~\frac{\partial^{j-l+1}\hu^s(0,\xi)}{\partial\eta^{j-l+1}},
~\frac{\partial^{j-l} \hu^s_\xi(0,\xi)}{\partial\eta^{j-l}}
\end{eqnarray*}
can be easily prepared. By \eqref{eq:psi_rho} and
\eqref{eqn:def_of_Js}, the $\eta$-partial derivatives
of $J_i(\eta, \xi), ~i = 0, 1, 2$ at $\eta =0$ depends on the
$\eta$-partial derivatives of
$\psi^i(\eta, \xi), ~i = 1, 2, 3$ evaluated at $\eta = 0$, which can
be simply expressed in terms of the
curvature because of the applicability of their Maclaurin expansions
with respect to $\eta$. Consequently, we have the following explicit formulas:
\begin{eqnarray}
\begin{aligned}\label{eq:J_derivatives}
&\frac{\partial^l J_0(0, \xi)}{\partial \eta^l} = \frac{(-1)^l
(l+1)!\kappa(\xi)^l}{\norm{\g'(\xi)}^2}, \quad \frac{\partial^l
J_1(0, \xi)}{\partial \eta^l} = \kappa(\xi)(-1)^l l!\kappa(\xi)^l, \\
&\frac{\partial^l J_2(0, \xi)}{\partial \eta^l} =
-\frac{\kappa'(\xi)}{\norm{\g'(\xi)}^2}\left(
l (-1)^{l-1} \frac{(l+1)!}{2}\kappa(\xi)^{l-1} \right) -
\frac{\g'(\xi)\cdot \g''(\xi)}{\norm{\g'(\xi)}^4}(-1)^l (l+1)!\kappa(\xi)^l,
\end{aligned}
\end{eqnarray}
for the evaluation of $\frac{\partial^j}{\partial \eta^j}
\L(\hu^s)(0,\xi), ~s = \pm$ according to \eqref{eq:eta_d_L}. In other
words, because of the simple geometry and the location of ${
\hat\Gamma}_{K_F}$, we have an explicit formula for evaluating the
transformed extended jump conditions. Readers can refer to
\cite{adjeridHighOrderGeometry2024} and
\cite{meghaichiHigherOrderImmersed2024} for details about the
derivation of these formulas.

\subsection{The Frenet transformations between the physical and local
coordinates}\label{sec:Frenet_transf}

In this section, we discuss the computation and implementation for
the Frenet transformation $P_\Gamma(\eta, \xi)$ defined by
\eqref{eq:P_map} and the inverse of the Frenet transformation
$R_\Gamma(\x)$ defined by \eqref{eq:P_inv}. First, the computation
and implementation for the Frenet transformation $P_\Gamma(\eta, \xi)
= \x$ is straightforward once the parametrization $\g(\xi)$ for the
interface curve $\Gamma$ and the normal $\n(\xi)$ have been provided.
The program \verb!FrenetPmap.m! given in
\cite{adjeridMATLABImplementationGeometryConforming} is a sample
implementation of $P_\Gamma(\eta, \xi)$ in \MATLAB which can be used
as follows:
\begin{matlab}
xyList = FrenetPmap(etaxiList, CurveDG)
\end{matlab}
For the inputs, \verb!etaxiList! is an array with two rows such that
each of its columns provides the local coordinates of a point, while
\verb!CurveDG! is a structure variable with fields associated with
the differential geometry of the curve $\Gamma$ such that \verb!g!
for $\g(\xi)$, \verb!gp! for $\g'(\xi)$, \verb!gpp! for $\g''(\xi)$,
\verb!gt! for $\ttau(\xi)$, \verb!gn! for
$\n(\xi)$, \verb!curvature! for $\kappa(\xi)$, \verb!curvaturep! for
$\kappa'(\xi)$, and \verb!xi_domain! is an array formed by the end
points of the interval $[\xi_s, \xi_e]$. The output \verb!xyList! is an
array of the same size as \verb!etaxiList! whose columns contain the
physical coordinates of the points whose corresponding local
coordinates are specified by \verb!etaxiList!.

As indicated by equation \eqref{eq:LocalIFESpace_xy}, the inverse
Frenet transformation $R_\Gamma(\x) = (\eta, \xi)$ is critical for
constructing GC-IFE functions. This inverse transformation
is generally fully nonlinear due to the nonlinearity inherent to the
geometry of the interface curve $\Gamma$. The absence of an explicit
formula for computing $(\eta(\x), \xi(\x))$ directly from the
parametrization $\g(\xi)$ and the Frenet apparatus of $\Gamma$
necessitates a numerical approach to solve equation \eqref{eq:P_map}
for $(\eta, \xi)$ given a physical coordinate $\x$. Since the
Jacobian of equation \eqref{eq:P_map} is given explicitly by
\eqref{eq:Jacobian_P_map} in terms of the Frenet apparatus, we
propose employing Newton's method to implement the inverse Frenet
transformation $R_\Gamma(\x) = (\eta, \xi)$.

Given an interface element $K$ and a point $\x \in K$, an initial guess
$(\eta_{0,K}, \xi_{0, K})$ is needed for computing $(\eta, \xi)$,
the solution to equation \eqref{eq:P_map}, using Newton's method.
As described in \cite{adjeridHighOrderGeometry2024}, as
well as by \eqref{eq:LocalIFESpace_xy} in the previous section, the
construction of GC-IFE functions is carried out on each interface
element $K \in \T_h^i$ and the inverse Frenet transformation $R_\Gamma$ is
applied to $\x \in K$. This implies that $(\eta, \xi) = R_\Gamma(\x)$
is in the Frenet fictitious element $\hK_F = [-h, h] \times
[\xi_{1,K}, \xi_{2,K}]$.
Therefore, when the mesh size $h$ of $\T_h$ is small, $\eta$ should
be close to $0$ and $\eta_{0, K} = 0$ should be a good initial guess
for a Newton iteration. We emphasized that this is a good initial
guess for $\eta$ for every
interface element $K \in \T_h^i$ because of the nature of the Frenet
fictitious element $\hK_F = [-h, h] \times [\xi_{1,K}, \xi_{2,K}]$.

Again, since the construction of GC-IFE functions needs to be carried
out on all interface elements,
we propose the following direct search approach for preparing the
initial guess for $\xi$ for a Newton iteration that computes
$R_\Gamma(\x)$ on each  interface element. First, we sample a
sufficient number of points
$\{\g(\xi^{(i)})\}_{i=1}^{\mathtt{N}_{\g}}$ with
$\{\xi^{(i)}\}_{i=1}^{\mathtt{N}_{\g}} \subset [\xi_s,\xi_e]$ on the
interface curve $\Gamma$,
see these sampled points on $\Gamma$ in the illustration in
\autoref{fig:sample_points}. For each interface element $K \in \T_h^i$ we
let $K_c$ be its center, and then, we let $\xi_{0,K}$ be such that
\begin{eqnarray}
{\xi}_{0,K} \in \{\xi^{(i)}\}_{i=1}^{\mathtt{N}_{\g}} ~~\text{and}~~
{ \xi}_{0,K} = \argminA_{1 \leq i \leq \mathtt{N}_{\g}}
\norm{\g\left(\xi^{(i)}\right)-K_c}. \label{eqn:xi_guess}
\end{eqnarray}
An efficient method based on the nearest point search can be used to
implement \eqref{eqn:xi_guess} when
$\mathtt{N}_{\g}$ is moderately large, and in general, we let
$\mathtt{N}_{\g} = O(1/h)$ for a mesh
$\T_h$.

\begin{figure}[htbp]
\center
\includegraphics[scale=1.3]{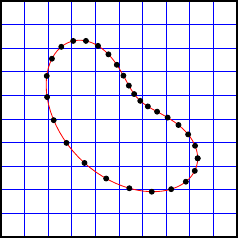}
\caption{The sample points $\g(\xi^{(i)})$ (black) on the interface
$\Gamma$ (red)}
\label{fig:sample_points}
\end{figure}

The program \verb!xiInitGuess.m! given in
\cite{adjeridMATLABImplementationGeometryConforming} is a sample
implementation of the procedure above for generating an initial guess
${ \xi}_{0,K}$ for each interface element $K \in \T_h^i$. The
usage of this function is as follows:
\begin{matlab}
xiGuessList = xiInitGuess(mesh, IntElemList, CurveDG)
\end{matlab}
The input argument \verb!mesh! is a structure array defining a mesh
$\T_h$, whose field \verb!mesh.t! defines the mesh elements, and the
field \verb!mesh.p! defines mesh points. {\verb!IntElemList!} is an
array of integers
for the indices of interface elements in the mesh $\T_h$. The input
\verb!CurveDG!, as described before, provides functions associated
with the differential geometry of the interface curve $\Gamma$.
The output \verb!xiGuessList! is an array of the same length as
\verb!IntElemList! such that
the $i$-th entry of \verb!xiGuessList! is the value of
${\tilde\xi}_{0,K}$ determined for the $i$-th interface element
$K$ listed by
\verb!IntElemList!.

We note that alternative methods exist for determining a suitable
initial guess $\xi_{0, K}$. For example,
\cite{adjeridHighOrderGeometry2024} employs a gradient descent
method. In general, this approach could supersede the direct search
\eqref{eqn:xi_guess} and may be particularly advantageous for an
interface $\Gamma$ with more complicated geometry. However, in our
numerical experiments, we found no significant difference, as
Newton's iteration consistently converged rapidly (usually in fewer
than 10 iterations) regardless of which method prepared $\xi_{0, K}$.

The usual implementation of Newton's method involves solving a linear
system at each iteration, whose coefficient matrix is the Jacobian
matrix of the system of nonlinear equations to be solved. {However,
when we use it to numerically solve $P_\Gamma(\eta, \xi) - \x = 0$
for $(\eta, \xi)$ for the inverse Frenet transformation $R_\Gamma(\x) =
(\eta, \xi)$, the Jacobian matrix is $J_{P_\Gamma}(\eta, \xi)$, and
its inverse is
$J_{R_\Gamma}(\eta, \xi)$ { which can be} directly computed by the Frenet
apparatus of $\Gamma$ according to
\eqref{eq:Jacobian_R_map}.} This suggests adopting the following
Newton iteration for implementing
the inverse Frenet transformation $R_\Gamma(\x) = (\eta, \xi)$:
\begin{enumerate}
\item Set $(\eta_0,\xi_0)=(0,\xi_{0,K})$.
\item Update the guess using a Newton iteration
\begin{equation}
\begin{bmatrix}
\eta_{i+1} \\
\xi_{i+1}
\end{bmatrix} =
\begin{bmatrix}
\eta_{i} \\
\xi_{i}
\end{bmatrix} -
\begin{bmatrix}
\n(\xi_i)^T\\ \rho(\eta_i, \xi_i)\ttau(\xi_i)^T
\end{bmatrix}(P_{\Gamma}(\eta_i, \xi_i) - \x).
\label{eqn:Newton}
\end{equation}
\end{enumerate}
We note that the iteration is carried out by the evaluation of the
Frenet apparatus of $\Gamma$ and without the need to solve a linear system.

A sample implementation of the inverse Frenet transformation is the
\MATLAB function \verb!FrenetRmap.m! in
\cite{adjeridMATLABImplementationGeometryConforming}, and it can be
used as follows:
\begin{matlab}
etaxiList = FrenetRmap(xyList, CurveDG, xiGuess, maxIter)
\end{matlab}
For the inputs, \verb!xyList! an array with two rows such that each
of its columns provides the physical  coordinates of a point in an
interface element $K \in \T_h^i$, \verb!xiGuess! is a guess about
$\xi$ associated with points $(x, y) \in K$, which can be ${
\tilde{\xi}}_{0, K}$ in \eqref{eqn:xi_guess} or
$\ximid$ to be defined in \eqref{eq:xi_01_etah_xih}, and
\verb!maxIter! is the maximum number of iterations allowed for the
Newton iteration \eqref{eqn:Newton} to be carried out.

\section{Construction of a basis for the GC-IFE space}\label{sec:local_space}

In this section, we detail the key computations involved in
constructing a basis for the GC-IFE space. Our work extends the
essential idea of GC-IFE functions, first introduced in
\cite{adjeridHighOrderGeometry2024}, by presenting a novel
construction procedure designed to yield a basis capable of providing
better approximation for interface problems. The construction
procedure to be presented consists of two stages: an initial
construction stage and a reconstruction stage. For initial
construction stage we first provide details for the computational and
implementation aspects of the basis construction procedure outlined
in \cite{adjeridHighOrderGeometry2024}. Then, we introduce a
generalized initial construction procedure that extends a chosen
polynomial from one side of the interface $\Gamma$ to the other side,
adhering to specified jump conditions. This approach is inspired by
the work in \cite{adjeridFrenetImmersedFinite2025,
guoDesignAnalysisApplication2019, guoHigherDegreeImmersed2019}.
This general construction procedure not only facilitates the
construction of local matrices associated with the produced GC-IFE
basis on interface elements, but it also serves as a crucial
preparatory step for the reconstruction stage.
The reconstruction stage is motivated by the desire to create a
GC-IFE basis with
enhancement that ensures stable and accurate computations for
interface problems.

\subsection{An initial construction of a GC-IFE basis} \label{sec:method_1}
We start from a suitable choice of basis for the underlying
polynomial space $\mathbb{Q}_m$ which is used to construct GC-IFE
functions in a piecewise polynomial form. Let $\mathbb{P}_m$ be the
space of polynomials of one variable with degree not exceeding $m$. Let
$\{\hp_i(x)\}_{i = 0}^m$ and $\{\hq_i(x)\}_{i = 0}^m$ be two bases of
$\mathbb{P}_m$ such that $deg(\hp_i) = deg(\hq_i) = i, ~0 \leq i \leq
m$, and let
\begin{eqnarray}
\phi_{i,j}(\eta,\xi)=\hq_j(\eta)\hp_i(\xi), ~~0 \leq i, j \leq m.
\label{eqn:basis_R}
\end{eqnarray}
Then, $\mathbb{Q}_m = \operatorname{span}\{\phi_{i,j}(\eta,\xi), ~0
\leq i, j \leq m\}$.
However, it is desirable to choose the $\hq$-polynomials such that
$\hq_j(0) = 0, ~1 \leq j \leq m$ and $\hq_j'(0)=0,~ 2\le j\le m$, in
order to take the advantage of
the special location and geometry of the Frenet interface
$\hGamma_{K_F}$ in the transformed jump conditions
\eqref{eqn:interface_conds_eta-xi} imposed on the Frenet fictitious
element $\hK_F$. For example, \cite{adjeridHighOrderGeometry2024} and
\cite{meghaichiHigherOrderImmersed2024} used
the canonical monomials $\hq_j(\eta) = \eta^j, ~0 \leq j \leq m$ to
establish the existence of GC-IFE ${\hat \V}^m_{\hbeta}(\hK_F)$ on
$\hK_F$. The authors suggested other choices in
\cite{adjeridHighOrderGeometry2024} and
\cite{meghaichiHigherOrderImmersed2024} for $\{\hp_i(x)\}_{i = 0}^m$
and $\{\hq_i(x)\}_{i = 0}^m$ based on other important considerations
such as computational cost and stability. To be specific, for each
interface element $K = \square A_1A_2A_3A_4$, we let
\begin{align}
\begin{split}
\eta_h &= \max\{\abs{\eta_i}, ~(\eta_i, \xi_i) = R_\Gamma(A_i),
~1 \leq i \leq 4\}, \\
\xi_{0, K} &= \min\{\xi_i, ~(\eta_i, \xi_i) = R_\Gamma(A_i), ~1
\leq i \leq 4\}, \\
\xi_{1, K} &= \max\{\xi_i, ~(\eta_i, \xi_i) = R_\Gamma(A_i), ~1
\leq i \leq 4\}, \\
\ximid &= \frac{1}{2}(\xi_{1, K} + \xi_{0, K}), ~\xi_h =
\frac{1}{2}(\xi_{1, K} - \xi_{0, K}).
\end{split} \label{eq:xi_01_etah_xih}
\end{align}
Then, we propose to choose $\{\hp_i(\xi)\}_{i = 0}^m$ such that
\begin{eqnarray}
\hp_i(\xi) = p_{i}\left(\frac{\xi-\ximid}{\xi_h}\right),
~\hq_i(\eta) = q_i\left(\frac{\eta}{\eta_h}\right), ~0 \leq i \leq
m, \label{eq:pi-qi_polynomials}
\end{eqnarray}
in which $\{p_i\}_{i = 0}^m$ are the standard Legendre polynomials
\cite{abramowitz1964handbook}, $q_0(x) = 1,
q_1(x) = p_1(x)$ and
\begin{eqnarray}
\begin{aligned}
q_i(x) &= ip_{i}(x) + (i-1)p_{i-2}(x) - (2i-1)p_{i-1}(0)x \\
& = (2i-1)xp_{i-1}(x) - (2i-1)p_{i-1}(0)x
\end{aligned}, \hspace{0.3in}2 \leq i \leq m. \label{eq:q_poly}
\end{eqnarray}
Then we can verify that these $q$-polynomials have the desirable
property: $q_i(0) = q_i'(0) = 0, ~2 \leq i \leq m$ and $q_1(0)=0$.

As proved in \cite{adjeridHighOrderGeometry2024}, the dimension of
the GC-IFE space ${\hat \V}^m_{\hbeta}(\hK_F)$ is $(m+1)^2$. This
motivates us to search  for a set of basis functions
in the following form:
\begin{equation}
\hphi_{i,j}(\eta,\xi)=
\begin{cases}
\hphi_{i,j}^+(\eta,\xi), & (\eta,\xi)\in \hK^+_F,\\
\hphi_{i,j}^-(\eta,\xi), & (\eta,\xi) \in \hK^-_F,
\end{cases} \hspace{0.1in} 0 \leq i, j \leq m, ~~ \hphi_{i,j}^+,
~\hphi_{i,j}^- \in \mathbb{Q}_m.
\label{eqn:hphi_Kp_Km_ij}
\end{equation}
Following the same arguments as for Lemma 3 in
\cite{adjeridHighOrderGeometry2024}, we can show that
the following functions
\begin{equation}
\hphi_{i,j}(\eta,\xi)=
\frac{1}{\hbeta(\eta,\xi)}q_j\left(\frac{\eta}{\eta_h}
\right)p_{i}\left(\frac{\xi-\ximid}{\xi_h}\right),\qquad  \ 1\le
j\le m,\  0\le i\le m, \label{eqn:hphi_Kp_Km_ij>1}
\end{equation}
satisfy the jump conditions \eqref{eqn:interface_conds_eta-xi}, and
they are linearly independent. Hence, these functions can be used as
part of a basis for the GC-IFE space ${\hat \V}^m_{\hbeta}(\hK_F)$.

We proceed to the construction of an additional $m+1$ functions,
$\hphi_{i, 0}(\eta, \xi),~ 0 \leq i \leq m$, to complete the basis.
We first note that the jump conditions
\eqref{eqn:interface_conds_eta-xi} in local coordinates have the
following form:
\begin{subequations} \label{eqns:IFE_conditions}
\begin{equation}\label{eqn:IFE_continuity}
\hu^{+}(0,\xi)=\hu^{-}(0,\xi),\qquad \xi \in \hGamma_{K_F},
\end{equation}
\begin{equation}\label{eqn:IFE_flux}
\beta^+ \hu^{+}_{\eta}(0,\xi)=\beta^-
\hu^{-}_{\eta}(0,\xi),\qquad \xi \in \hGamma_{K_F},
\end{equation}
\begin{equation}\label{eqn:IFE_extended}
\beta^+ \int_{\xi_{0, K}}^{\xi_{1,
K}}\frac{\p^j}{\p\eta^j}\L\left(\hu^{+}\right)(0,\xi)v(\xi)d\xi=\beta^-
\int_{\xi_{0, K}}^{\xi_{1,
K}}\frac{\p^j}{\p\eta^j}\L\left(\hu^{-}\right)(0,\xi)v(\xi)d\xi,
\hspace{0.1in}\forall v\in \mathbb{P}^m([\xi_0,\xi_1]),\ 0\le j\le m-2.
\end{equation}
\end{subequations}
Also, we let
\begin{eqnarray}
\N_l(\eta,\xi) = q_t\left(\frac{\eta}{\eta_h}
\right)p_{s}\left(\frac{\xi-\xi^{\mathtt{mid}}}{\xi_h}\right),\hspace{0.1in}l
= (t-2)(m+1) + (s+1), \hspace{0.1in} \ 0\le s\le m,\  2\le t\le m.
\label{eqn:lexicographical}
\end{eqnarray}
The authors proposed the following formula for $\hphi_{i, 0}(\eta,
\xi)$ in \cite{adjeridHighOrderGeometry2024}:
\begin{equation}
\hphi_{i,0}(\eta,\xi) =
\begin{cases}
\hphi_{i,0}^-(\eta,\xi) =
p_i\left(\frac{\xi-\xi^{\mathtt{mid}}}{\xi_h}\right), & (\eta,
\xi) \in   (\eta,\xi)\in \hK^-_F,\\[5pt]
\hphi_{i,0}^+(\eta,\xi) =
p_i\left(\frac{\xi-\xi^{\mathtt{mid}}}{\xi_h}\right)+
\sum_{l=1}^{m^2-1}c^{(i)}_l \N_l(\eta,\xi), &
(\eta, \xi) \in   (\eta,\xi)\in \hK^+_F,
\end{cases} \hspace{0.1in} 0 \leq i \leq m,
\label{eqn:phi_i0}
\end{equation}
with the coefficients $c^{(i)}_l,~1 \leq l \leq m^2-1$, to be
determined such that $\hphi_{i, 0}(\eta, \xi),~ 0 \leq i \leq m$
satisfy the weak jump conditions \eqref{eqns:IFE_conditions}. Because
the $q$-polynomials are chosen such that $q_j(0) = q_j'(0) = 0, ~2
\leq j \leq m$, the basis functions
$\hphi_{i,0}(\eta,\xi), ~0 \leq i \leq m$ proposed in
\eqref{eqn:phi_i0} satisfy the jump conditions
\eqref{eqn:IFE_continuity} and \eqref{eqn:IFE_flux} for any
coefficients $c^{(i)}_l,~1 \leq l \leq m^2-1$.
This suggests to use equations \eqref{eqn:IFE_extended} to determine
the coefficients $c^{(i)}_l,~1 \leq l \leq m^2-1$. Replacing $u^+,
u^-$ and $v$ in \eqref{eqn:IFE_extended} by
$\hphi_{i,0}^+$, $\hphi_{i,0}^-$, and $\hp_{k-1},~1 \leq k \leq m+1$,
respectively, leads to a system of linear equations for the
coefficients $\c^{(i)} = \big(c^{(i)}_l\big)_{l = 1}^{m^2-1}$, which
can be written in matrix form as follows:
\begin{eqnarray}
&&\mathbf{A}\c^{(i)} =\frac{\beta^- - \beta^+}{\beta^+}
\mathbf{b}(i), ~~~~0 \leq i \leq m, ~~\text{with}~~ \mathbf{A}=
\begin{bmatrix}
A^{(0)} \\ A^{(1)} \\ \vdots\\ A^{(m-2)}
\end{bmatrix},~~ \mathbf{b}(i) =
\begin{bmatrix}
\mathbf{b}^{(0)}(i) \\
\mathbf{b}^{(1)}(i) \\
\vdots \\
\mathbf{b}^{(m-2)}(i)
\end{bmatrix}, \label{eq:Abc}
\end{eqnarray}
where
\begin{eqnarray}
&&\left(A^{(j)}\right)_{k,l}=\int_{{\hat \Gamma}_{K_F}}\left(
\frac{\partial^j}{\partial \eta^j} \mathscr{L}(\N_l)(0,\xi)
\right)\hp_{k-1}(\xi) d\xi,\qquad \text{for } ~~0 \leq j \leq m-2,
\label{eq:Aj} \\[5pt]
&&\left(\mathbf{b}^{(j)}(i)\right)_{k}=   \int_{{\hat \Gamma}_{K_F}}
\left(\frac{\partial^j}{\partial \eta^j}
\mathscr{L}(\hp_i)(0,\xi)\right)\hp_{k-1}(\xi) d\xi,
\qquad \text{for} ~~0 \leq j \leq m-2.
\label{eq:bji}
\end{eqnarray}
In general, we need to use numerical quadratures to compute the
matrix $\mathbf{A}$ and the vector $\mathbf{b}(i)$. For that,
let $\{(\czeta_r,\cw_r)\}_{r=1}^{\Nq}$ be the nodes and weights of
the Gauss-Legendre quadrature (GLQ)
on $[-1,1]$ with $\Nq\ge m+1$. Then
\begin{eqnarray}
\zeta_r=\ximid+\xi_h \czeta_r, ~~w_i=\xi_h\cw_r, ~1 \leq r \leq \Nq,
\end{eqnarray}
are the nodes and weights for the GLQ on $[\xi_{0, K}, \xi_{1, K}]$.
To  evaluate  an entry of $\mathbf{b}^{(j)}(i)$, we let
$\hu^s(\eta, \xi) = \hp_i(\xi)$ in \eqref{eq:eta_d_L} to obtain
\begin{eqnarray*}
\frac{\partial^j}{\partial \eta^j} \mathscr{L}(\hp_i)(0,\xi) &=&
\sum_{l=0}^{j}\binom{j}{l}\left(
\frac{\partial^{l}J_0(0,\xi)}{\partial\eta^{l}}
\frac{\partial^{j-l} \hp_i''(\xi)}{\partial\eta^{j-l}} +
\frac{\partial^{l}J_2(0,\xi)}{\partial\eta^{l}}
\frac{\partial^{j-l} \hp_i'(\xi)}{\partial\eta^{j-l}}\right) \\
&=& \frac{\partial^{j}J_0(0,\xi)}{\partial\eta^{j}}
\hp_i''(\xi) + \frac{\partial^{j}J_2(0,\xi)}{\partial\eta^{j}}\hp_i'(\xi).
\end{eqnarray*}
Hence, by applying the GLQ, we have
\begin{eqnarray}
\left(\mathbf{b}^{(j)}(i)\right)_k &=& \int_{{\hat \Gamma}_{K_F}}
\left(\frac{\partial^j}{\partial \eta^j}
\mathscr{L}(\hp_i)(0,\xi)\right)\hp_{k-1}(\xi) d\xi \nonumber \\
&=& \int_{\xi_{0, K}}^{\xi_{1, K}}
\frac{\partial^{j}J_0(0,\xi)}{\partial\eta^{j}}
\hp_i''(\xi) \hp_{k-1}(\xi) d\xi + \int_{\xi_{0, K}}^{\xi_{1, K}}
\frac{\partial^{j}J_2(0,\xi)}{\partial\eta^{j}} \hp_i'(\xi)
\hp_{k-1}(\xi) d\xi \nonumber \\
&\approx& \sum_{r = 1}^{\Nq}\left({\color{cyan}\xi_h^{-1}}
{\color{red}\frac{\partial^{j}J_0(0,\zeta_r)}{\partial\eta^{j}}}
{\color{blue}\cw_rp_i''(\czeta_r)p_{k-1}(\czeta_r)}
+ {\color{red}\frac{\partial^{j}J_2(0,\zeta_r)}{\partial\eta^{j}}}
{\color{blue}\cw_r p_i'(\czeta_r)p_{k-1}(\czeta_r)}\right).
\label{eq:vector_b_entries}
\end{eqnarray}
For computing the matrix $\mathbf{A}$, we note that
\begin{eqnarray*}
\frac{\partial^{j+2} \N_l(0, \xi)}{\partial \eta^{j+2}} &=&
\frac{1}{\eta_h^{j+2}} q_t^{(j+2)}(0)
p_{s}\left(\frac{\xi-\xi^{\mathtt{mid}}}{\xi_h}\right), \\
\frac{\partial^{j-l}}{\partial \eta^{j-l}}\left(\frac{\partial^2
\N_l(0, \xi)}{\partial \xi^2}\right) &=&
\frac{1}{\eta_h^{j-l}\xi_h^2}q_t^{(j-l)}(0)p_{s}''\left(\frac{\xi-\xi^{\mathtt{mid}}}{\xi_h}\right),
\\
\frac{\partial^{j-l+1} \N_l(0, \xi)}{\partial \eta^{j-l+1}} &=&
\frac{1}{\eta_h^{j-l+1}}
q_t^{(j-l+1)}(0)p_{s}\left(\frac{\xi-\xi^{\mathtt{mid}}}{\xi_h}\right), \\
\frac{\partial^{j-l}}{\partial \eta^{j-l}}\left(\frac{\partial
\N_l(0, \xi)}{\partial \xi}\right) &=&
\frac{1}{\eta_h^{j-l}\xi_h}q_t^{(j-l)}(0)
p_{s}'\left(\frac{\xi-\xi^{\mathtt{mid}}}{\xi_h}\right).
\end{eqnarray*}
Then, by \eqref{eq:eta_d_L}, we can compute the entries of the matrix
$A^{(j)}, ~0 \leq j \leq m-2$ by a numerical quadrature as follows
\begin{align}
&\left(A^{(j)}\right)_{k, l} = \int_{{\hat \Gamma}_{K_F}}\left(
\frac{\partial^j}{\partial \eta^j} \mathscr{L}(\N_l)(0,\xi)
\right)\hp_{k-1}(\xi) d\xi \nonumber \\
=& \int_{\xi_{0, K}}^{\xi_{1, K}} \left[\frac{\partial^{j+2}
\N_l(0, \xi)}{\partial \eta^{j+2}}
+ \sum_{i=0}^{j}\binom{j}{i}\left(
\frac{\partial^{i}J_0(0,\xi)}{\partial\eta^{i}}
\frac{\partial^{j-i}}{\partial
\eta^{j-i}}\left(\frac{\partial^2  \N_l(0, \xi)}{\partial
\xi^2}\right)  \right. \right. \nonumber \\
& \hspace{0.4in} \left. \left. +
\frac{\partial^{i}J_1(0,\xi)}{\partial\eta^{i}}
\frac{\partial^{j-i+1} \N_l(0, \xi)}{\partial \eta^{j-i+1}}  +
\frac{\partial^{i}J_2(0,\xi)}{\partial\eta^{i}}
\frac{\partial^{j-i}}{\partial \eta^{j-i}}\left(\frac{\partial
\N_l(0, \xi)}{\partial \xi}\right)\right) \right]\hp_{k-1}(\xi)
d\xi \nonumber \\
\begin{split}
\approx & \sum_{r = 1}^{\Nq} \left[{\color{cyan}\frac{\xi_h}{\eta_h^{j+2}}}
{\color{blue}\cw_rq_t^{(j+2)}(0) p_s(\czeta_r) p_{k-1}(\czeta_r)}
+ \sum_{i=0}^{j}\binom{j}{i}\left(
{\color{cyan}\frac{1}{\eta_h^{j-i}\xi_h}}
{\color{red}\frac{\partial^{i}J_0(0,\zeta_r)}{\partial\eta^{i}}}
{\color{blue}\cw_rq_t^{(j-i)}(0)p_{s}''(\czeta_r)
p_{k-1}(\czeta_r)} \right. \right. \\
&\left. \left. + {\color{cyan}\frac{\xi_h}{\eta_h^{j-i+1}}}
{\color{red}\frac{\partial^{i}J_1(0,\zeta_r)}{\partial\eta^{i}}}
{\color{blue}\cw_rq_t^{(j-i+1)}(0)p_{s}(\czeta_r)p_{k-1}(\czeta_r)}
+  {\color{cyan}\frac{1}{\eta_h^{j-i}}}
{\color{red}\frac{\partial^{i}J_2(0,\zeta_r)}{\partial\eta^{i}}}
{\color{blue}\cw_rq_t^{(j-i)}(0) p_{s}'(\czeta_r)p_{k-1}(\czeta_r)}
\right)  \right]. \label{eq:matrix_A_entries}
\end{split}
\end{align}
In formulas \eqref{eq:vector_b_entries} and
\eqref{eq:matrix_A_entries}, the quantities in blue  are in
terms of the polynomials $p_i$ and $q_i$ chosen in
\eqref{eq:pi-qi_polynomials}, as well as the nodes and weights of GLQ
on $[-1, 1]$. This means they are independent of the interface
elements or the interface itself, and they can be prepared before
implementing $\mathbf{b}^{(j)}(i)$ and $A^{(j)}$ over all interface
elements. The
quantities in red follow the formula given in
\eqref{eq:J_derivatives} which are determined by the Frenet apparatus
of the interface curve. Those quantities in cyan depend on the
interface element $K$, and they are computed according to
\eqref{eq:xi_01_etah_xih}.

We now discuss the implementation for the vector
$\mathbf{b}^{(j)}(i)$ and matrix $A^{(j)}$
defined in \eqref{eq:bji} and  \eqref{eq:Aj}, respectively. We start
from the preparation of
needed arrays. The sample \MATLAB program \verb!pipkValues.m! given in
\cite{adjeridMATLABImplementationGeometryConforming} is for preparing
the values of
$p_i^{(d)}(x)p_j(x), ~0 \leq i, j \leq m, ~d = 0, 1, 2$ at Gaussian
quadrature nodes in the reference interval $[-1, 1]$. It can be used as follows:
\begin{matlab}
pipk = pipkValues(degree, p_PolyFun, czeta)
\end{matlab}
For the inputs, \verb!degree! specifies the degree of the underlying
polynomial space $\mathbb{Q}_m$, i.e.,
the value of \verb!degree! is $m$, \verb!p_PolyFun(x, i, d)! is a
\MATLAB function for $p_i^{(d)}(x)$ used in
\eqref{eq:pi-qi_polynomials}, and \verb!czeta! is the array of
Gaussian quadrature nodes to be used. The output \verb!pipk! is a
4-dimensional array such that
\begin{eqnarray*}
\verb!pipk(:, :, d, r)! =
\Big(p_{i-1}^{(d-1)}(\czeta_r)p_{j-1}(\czeta_r)\Big)_{i, j = 1}^{m+1},
~1 \leq d \leq 3, ~1 \leq r \leq \Nq.
\end{eqnarray*}
The program \verb!qiValues.m! is for preparing $q_t^{(d)}(0), 0 \leq
t \leq m$, $0 \leq d \leq m$. This \MATLAB function can be used as follows:
\begin{matlab}
q0 = qiValues(degree, q_PolyFun)
\end{matlab}
The input argument \verb!degree! is as previously explained,
\verb!q_PolyFun(x, i, d)! is a \MATLAB function for evaluating
$q_i^{(d)}(x)$ used in \eqref{eq:pi-qi_polynomials}. The output
\verb!q0! is a 2-dimensional array:
\begin{eqnarray*}
\verb!q0! = \Big(q_{i-1}^{(d-1)}(0)\Big)_{i = 1, d = 1}^{m+1, m+1}.
\end{eqnarray*}
We also prepare the values for the $\eta$-partial derivatives
of $J_k(\eta, \xi), ~k = 0, 1, 2$ at $\eta =0$ before assembling the
matrices $A^{(j)}, 0 \leq j \leq m-2$. A sample \MATLAB function for
this task is \verb!J012Values.m! in
\cite{adjeridMATLABImplementationGeometryConforming}, and it can be
used as follows:
\begin{matlab}
J012 = J012Values(degree, CurveDG, zeta)
\end{matlab}
The input arguments \verb!degree, CurveDG! are explained before, and
\verb!zeta! is the array
of Gaussian quadrature nodes in $[\xi_{0K}, \xi_{1K}]$. The output
\verb!J012! is 3-dimensional array
such that
\begin{eqnarray*}
\verb!J012(:, k, :)! = \left(\frac{\partial^{i-1} J_k(0,
\zeta_j)}{\partial \eta^{i-1}}\right)_{i = 1, j=1}
^{m-1, \Nq}, \hspace{0.3in} k = 0, 1, 2.
\end{eqnarray*}

The sample \MATLAB function \verb!bijVector.m! in
\cite{adjeridMATLABImplementationGeometryConforming} is for
constructing the vector
$\mathbf{b}^{(j)}(i), ~0 \leq i \leq m, ~0 \leq j \leq m-2$. We can
use this function as follows:
\begin{matlab}
bij = bijVector(i, j, degree, J012, pipk, xi_h, cw)
\end{matlab}
Here, the inputs \verb!i! and \verb!j! are the integers { $i,j$} in
$\mathbf{b}^{(j)}(i)$,
the arguments \verb!degree, J012, pipk! are
as explained before, \verb!xi_h! is $\xi_h$
defined in \eqref{eq:xi_01_etah_xih}, and \verb!cw! is a vector
containing the Gaussian
quadrature weights on $[-1, 1]$. The output \verb!bij! is the vector
$\mathbf{b}^{(j)}(i)$ defined in \eqref{eq:bji}.

The \MATLAB function \verb!AjMatrix.m! in
\cite{adjeridMATLABImplementationGeometryConforming} is for
constructing the matrix
$A^{(j)}, ~0 \leq j \leq m-2$, which can be used as follows:
\begin{matlab}
Aj = AjMatrix(j, degree, J012, pipk, q0, eta_h, xi_h, cw)
\end{matlab}
Here, the argument \verb!j! is the integer $j$ in $A^{(j)}$, arguments
\verb!degree, J012, pipk, q0, xi_h, cw!
are as explained above, \verb!eta_h! is $\eta_h$ defined in
\eqref{eq:xi_01_etah_xih}. The output
\verb!Aj! is $A^{(j)}$ defined by \eqref{eq:Aj}.

Lastly, we note that the matrix $\mathbf{A}$ in \eqref{eq:Abc} might become
ill-conditioned for large degrees $m$ or small mesh sizes $h$. This
ill-conditioning stems from the scaling induced by the higher order
derivatives in $\p_{\eta^j}\L(\N_l)$ in formula \eqref{eq:Aj} since
$\N_l$ is obtained via a scaling of polynomial by $\eta_h$ and
$\xi_h$ which are comparable to the mesh size $h$. Hence, we expect
$\p_{\eta^j}\L(\N_l)\sim h^{-j-2}$ as $h\to 0^{+}$ which can be large
for high degrees $m$. To circumvent this issue, we proposed in
\cite{adjeridHighOrderGeometry2024} the use of the Jacobi
preconditionner $\mathbf{P}^{(1)}_{i,j}=\delta_{i,j}\mathbf{A}_{i,j}$, where
$\delta_{ij}$ is the Kronecker delta function.
We note, however, that other diagonal preconditioners
can be used such as the row normalizing preconditioner
$\mathbf{P}^{(2)}_{i,j}=\delta_{i,j}\norm{\mathbf{A}_{i,:}}$ where
the diagonal entries are the Euclidean norms of the rows of
$\mathbf{A}$. Therefore, $\mathbf{P}^{(2)}$ accounts for potentially
large entries in the matrix $\mathbf{A}$ that may not be on the diagonal.

To illustrate the importance of the preconditioning step, we consider two
examples where the interface is the unit circle parametrized
by $\xi \to (\cos\xi,\sin\xi)$. In the first example, we fix the
interface element
$K(h)=\frac{1}{\sqrt{2}}\left(1+\frac{h}{2}[-1,1]^2\right)$ with a constant
diameter $h=\frac{1}{2}$ and vary the degree $m=1,2,\ldots,8$. In the
second example, we fix the
degree $m=4$ and vary $h=2^{-1},2^{-2},\ldots,2^{-8}$. For both
examples, we present the condition number of the matrix $\mathbf{A}$
before and after applying the preconditionner $\mathbf{P}^{(1)}$ and
$\mathbf{P}^{(2)}$ in \autoref{sufig:cond_A_m} (Example 1) and
\autoref{subfig:cond_A_h} (Example 2). We observe that the condition
number of $\mathbf{A}$ increases exponentially as the polynomial
degree increases and as a power of $h^{-1}$ as $h\to 0$, and that, as
expected, the preconditioned matrices $\mathbf{P}^{(1)}\mathbf{A}$
and $\mathbf{P}^{(2)}\mathbf{A}$ are substantially better conditioned
than $\mathbf{A}$, especially for higher degrees $m$. We also note
that, for a fixed degree $m$, the preconditioned matrices are not
affected by the mesh size $h$ as observed in
\cite{adjeridHighOrderGeometry2024}.
\begin{figure}[ht]
\begin{subfigure}{.45\textwidth}
\centering
\includegraphics[scale=.8]{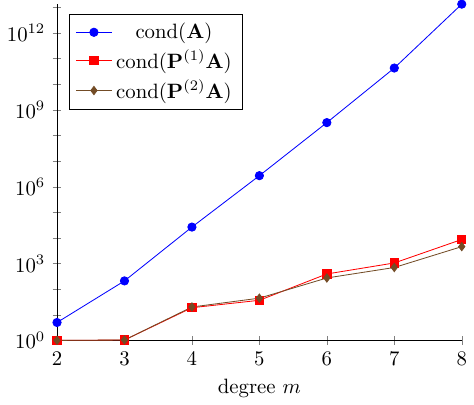}
\caption{Varying the degree with $h=2^{-1}$.}
\label{sufig:cond_A_m}
\end{subfigure}
\begin{subfigure}{.45\textwidth}

\centering
\includegraphics[scale=.8]{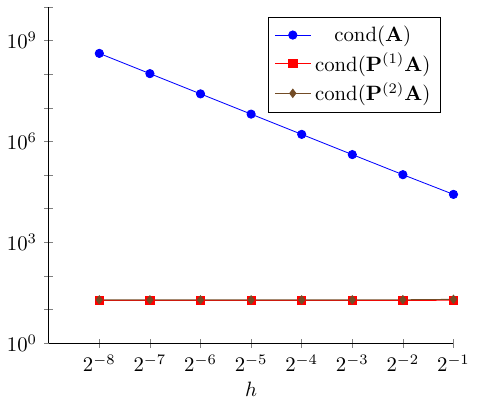}
\caption{Varying the size of the element with $m=4$.}
\label{subfig:cond_A_h}
\end{subfigure}
\caption{The condition number of $\mathbf{A}$ before and after the
preconditioning procedure under degree refinement (left, in
semi-log plot), and
under mesh refinement (right, in log-log plot)}
\label{fig:cond_A}
\end{figure}

\subsection{A generalized initial construction of a GC-IFE basis}
\label{sec:method_general}

The construction procedure discussed in the previous subsection is
based on the idea developed in
\cite{adjeridHighOrderGeometry2024,meghaichiHigherOrderImmersed2024}.
A notable feature of this construction procedure is that $m(m+1)$
functions among the $(m+1)^2$ basis functions are explicitly
described in simple formulas
in \eqref{eqn:hphi_Kp_Km_ij>1}. Computations are required only for
the coefficients $\mathbf{c}^{(i)}$
of the remaining $m+1$ basis functions $\hphi_{i,0}(\eta, \xi), ~0
\leq i \leq m$. Nevertheless, it is well known that a basis for a
polynomial space in terms of simple polynomials can lead to
ill-conditioned computational procedures. For example, when using the
simple and elegant monomial basis $\{x^i\}_{i = 0}^m$ to compute the
$L^2$ projection $p(x) = \sum_{i=0}^m c_i x^i$ of a function $f(x)$ onto
the polynomial space $\mathbb{P}_m$, the matrix $\A$ in the linear
system used to determine the coefficients
$c_i, 0 \leq i \leq m$ is a Hilbert matrix, which is well known for
its ill-conditioning. In contrast, if
the Legendre polynomials are used as a basis of $\mathbb{P}_m$ to
find the $L^2$ projection of $f(x)$, the matrix $\A$ is optimally
conditioned, i.e., it has the best possible condition number. This
consideration motivates us to seek bases for the GC-IFE space
$\V^m_{\beta}(K)$ in more general form than the one described in the
previous section.
Moreover, this approach allows us to consider a more general form of interface
conditions such as the ones used for hyperbolic interface problems,
where the extended
interface condition \eqref{eqn:IFE_extended} may be different, or in situations
where the diffusion coefficient $\beta$ is not piecewise constant.

We start from the basis $\R=\{\R_l\}_{l=1}^{(m+1)^2}$ for the
polynomial space $\mathbb{Q}_m$ where
\begin{equation}
\R_{(m+1)t+s+1}(\eta,\xi) = \hq_t(\eta)\hp_s(\xi) = q_t
\left(\frac{\eta}{\eta_h}\right)p_s
\left(\frac{\xi-\ximid}{\xi_h}\right),\qquad 0\le s, t \le m,
\label{eqn:R_def}
\end{equation}
with the univariate polynomials $p_i$ and $q_i$ as given  in
\eqref{eq:pi-qi_polynomials} and \eqref{eq:q_poly}. Then, in contrast
to the basis in the special format described in
\eqref{eqn:hphi_Kp_Km_ij>1} and \eqref{eqn:phi_i0}, we proceed to
construct a basis $\hB = \{\hlambda_j\}_{j=1}^{(m+1)^2}$ in a more
general format for the IFE space $\hV^m_{\beta}(\hK_F)$ on the Frenet
fictitious element $\hK_F$ of an interface element $K$, such that
the functions in this basis have the following form:
\begin{equation}
\hlambda_j(\eta,\xi)=
\begin{cases}
\hlambda_j^-(\eta,\xi) = \ds \sum_{i=1}^{(m+1)^2}
C^-_{i,j}\R_i(\eta,\xi), & \eta<0, \\[12pt]
\hlambda_j^+(\eta,\xi) =\ds \sum_{i=1}^{(m+1)^2}
C^+_{i,j}\R_i(\eta,\xi), & \eta>0,
\end{cases} \hspace{0.2in} 1 \leq j \leq (m+1)^2 \label{eq:basisFun_lambda_j},
\end{equation}
in which the coefficients $C^\pm_{i,j}, ~1 \leq i \leq m+1)^2$, are to
be determined such that ${\hat u}(\eta, \xi) = \hlambda_j(\eta,\xi)$
satisfies the transformed interface jump conditions specified in
\eqref{eqns:IFE_conditions} along the interface line segment $\hGamma_{K_F}$.

Specifically, imposing the interface jump conditions on
$\hlambda_j(\eta,\xi)$ leads to the following linear equations for
the coefficients $C^\pm_{i,j},~1 \leq i \leq (m+1)^2$:
\begin{eqnarray*}
&&\sum_{i=1}^{(m+1)^2} \left(\int_{\xi_{0, K}}^{\xi_{1, K}} \R_i(0,
\xi) \hp_{k-1}(\xi) d\xi\right)C^+_{i, j}
= \sum_{i=1}^{(m+1)^2} \left(\int_{\xi_{0, K}}^{\xi_{1, K}} \R_i(0,
\xi) \hp_{k-1}(\xi) d\xi\right)C^-_{i, j} \\
&&\beta^+\sum_{i=1}^{(m+1)^2} \left(\int_{\xi_{0, K}}^{\xi_{1, K}}
\Big(\partial_\eta \R_i(0, \xi)\Big) \hp_{k-1}(\xi) d\xi\right)C^+_{i, j}
= \beta^-\sum_{i=1}^{(m+1)^2} \left(\int_{\xi_{0, K}}^{\xi_{1, K}}
\Big(\partial_\eta \R_i(0, \xi)\Big) \hp_{k-1}(\xi) d\xi\right)C^-_{i, j} \\
&&\beta^+\sum_{i=1}^{(m+1)^2} \left(\int_{\xi_{0, K}}^{\xi_{1, K}}
\Big(\frac{\p^n}{\p\eta^n}\L\R_i(0, \xi)\Big) \hp_{k-1}(\xi)
d\xi\right)C^+_{i, j}
= \beta^-\sum_{i=1}^{(m+1)^2} \left(\int_{\xi_{0, K}}^{\xi_{1, K}}
\Big(\frac{\p^n}{\p\eta^n}\L\R_i(0, \xi)\Big) \hp_{k-1}(\xi)
d\xi\right)C^-_{i, j}, \\[10pt]
&& \text{for}~~k = 1, 2, \ldots, m+1, ~n = 0, 1, \ldots, m-2.
\end{eqnarray*}
Letting $C_j^\pm$ be the column vector formed by the coefficients
$C^\pm_{i,j},~1 \leq i \leq (m+1)^2$,
we can write this system of linear equations in a matrix-vector form as follows:
\begin{eqnarray}
\tilde{\mathbf{A}} C_j^+ = J\tilde{\mathbf{A}}C_j^-
~~\text{with}~~J=\operatorname{diag}\left(\underbrace{1,1,\dots,1}_{m+1},
\frac{\beta^-}{\beta^+},\frac{\beta^-}{\beta^+},\dots,\frac{\beta^-}{\beta^+}\right),
~~\tilde{\mathbf{A}} =
\begin{bmatrix}
B^{(0)} \\
B^{(1)} \\
{\tilde A}^{(0)} \\
\vdots \\
{\tilde A}^{(m-2)}
\end{bmatrix}, \label{eq:vector_coeff_eq}
\end{eqnarray}
where the sub-matrices of $\tilde{\mathbf{A}}$ are
\begin{eqnarray}
&&B^{(0)} = \left(\int_{\xi_{0, K}}^{\xi_{1, K}} \R_i(0, \xi)
\hp_{k-1}(\xi) d\xi\right)_{k, i =1}^{m+1, (m+1)^2},
~~B^{(1)} =  \left(\int_{\xi_{0, K}}^{\xi_{1, K}} \Big(\partial_\eta
\R_i(0, \xi)\Big) \hp_{k-1}(\xi) d\xi\right)_{k, i = 1}^{m+1,
(m+1)^2},  \label{eq:B_matrices}\\[10pt]
&&{\tilde A}^{(n)} = \left(\int_{\xi_{0, K}}^{\xi_{1, K}}
\Big(\frac{\p^n}{\p\eta^n}\L\R_i(0, \xi)\Big) \hp_{k-1}(\xi)
d\xi\right)_{k, i = 1}^{m+1, (m+1)^2}, ~~0 \leq n \leq m-2.
\label{eq:tA_matrices}
\end{eqnarray}
By Lemma 1 in \cite{adjeridHighOrderGeometry2024}, for any choice of
$\hlambda_j^-(\eta,\xi) \in \mathbb{Q}_m$, a polynomial
$\hlambda_j^+(\eta,\xi) \in \mathbb{Q}_m$ is uniquely determined by
the interface jump conditions \eqref{eqns:IFE_conditions}. This means
that $C_j^+$ is uniquely determined by $\tilde{\mathbf{A}} C_j^+ =
J\tilde{\mathbf{A}}C_j^-$ for any choice of $C_j^-$; consequently,
the matrix $\tilde{\mathbf{A}}$ is nonsingular.
This suggests that, following the extension idea in the IFE literature
\cite{adjeridFrenetImmersedFinite2025,
guoDesignAnalysisApplication2019, guoHigherDegreeImmersed2019}, we
can determine an IFE function $\hlambda_j(\eta,\xi)$ by a choice of
its component $\hlambda_j^-(\eta,\xi)$. Hence,
choosing a basis $\{\hlambda_j^-(\eta,\xi)\}_{j = 1}^{(m+1)^2}$ for
the polynomial space $\mathbb{Q}_m$, this extension procedure extends
$\{\hlambda_j^-(\eta,\xi)\}_{j = 1}^{(m+1)^2}$ to
$\{\hlambda_j^+(\eta,\xi)\}_{j = 1}^{(m+1)^2}$ which yields a basis
$\{\hlambda_j\}_{j=1}^{(m+1)^2}$ for the IFE space $\hV^m_{\beta}(\hK_F)$.
Since the matrix $J$ is nonsingular, this extension procedure can be
reversed, i.e., we can first choose a
basis $\{\hlambda_j^+(\eta,\xi)\}_{j = 1}^{(m+1)^2}$ for
$\mathbb{Q}_m$ and then extend them to form a basis
$\{\hlambda_j\}_{j=1}^{(m+1)^2}$ for the IFE space $\hV^m_{\beta}(\hK_F)$.

Computationally, this extension procedure can be carried out
efficiently in a collective way as follows. Let
$C^\pm = [C_1^\pm, C_2^\pm, \ldots, C_{(m+1)^2}^\pm]$, then, by
\eqref{eq:vector_coeff_eq}, matrices
$C^-$ and $C^+$ satisfy the following equation:
\begin{eqnarray}
\tilde{\mathbf{A}} C^+ = J\tilde{\mathbf{A}}C^-. \label{eq:matrix_coeff_eq}
\end{eqnarray}
Hence, the extension can be carried out by choosing a nonsingular
matrix $C^\pm$, which consequently
guarantees $\{\hlambda_j^\pm(\eta,\xi)\}_{j = 1}^{(m+1)^2}$ is a
basis of $\mathbb{Q}_m$, and computing
$C^\mp$ from \eqref{eq:matrix_coeff_eq}. Then use the $j$-th column
of $C^\pm$ as the coefficients
$C_{i,j}^\pm, ~1 \leq i \leq (m + 1)^2$ in
\eqref{eq:basisFun_lambda_j} to form the
$j$-th basis function $\hlambda_j(\eta,\xi)$. As for the choice of
$C^{-}$ or $C^{+}$, if $\abs{K^+}$ is larger than $\abs{K^-}$, we
choose $C^+$ to be the identity. However, this choice is not critical
because of the recommended following reconstruction to be discussed in
the next section.

It is also interesting to note that the construction procedure
presented in \autoref{sec:method_1} is a special case of this
general construction procedure based on extension. This is because
the  coefficient vectors $\c^{(0)},\c^{(1))},\ldots,\c^{(m)}$
are determined
by \eqref{eq:Abc} in the initial construction procedure in
\autoref{sec:method_1}. Then, it can be verified that
the following two matrices satisfy \eqref{eq:matrix_coeff_eq} :
\begin{equation}
C^- =
\begin{bmatrix}\ds
I_{m+1} & \mathbf{0}_{(m+1)\times m(m+1)} \\
\mathbf{0}_{m(m+1)\times (m+1)}  & \ds \frac{1}{\beta^-}I_{m(m+1)}
\end{bmatrix}, ~~
C^+=
\begin{bmatrix}
I_{m+1} & \mathbf{0}_{(m+1)\times (m+1)}& \mathbf{0}_{(m+1)\times (m^2-1)}  \\
\mathbf{0}_{(m+1)\times (m+1)} &  \ds \frac{1}{\beta^+}I_{m+1}&
\mathbf{0}_{(m+1)\times (m^2-1)} \\
\c & \mathbf{0}_{(m^2-1)\times (m+1)}  & \ds \frac{1}{\beta^+}I_{m^2-1}&
\end{bmatrix},
\label{eqn:C0p_C0m}
\end{equation}
where $\c=[\c^{(0)},\c^{(1))},\dots,\c^{(m)}]$. In this way, since
these two matrices depend on $\c$ only, the
initial construction in Subsection \autoref{sec:method_1} has been
interpreted as a general initial construction.

Similarly to the matrix $\mathbf{A}$ in the previous section, the
matrix $\tilde{\mathbf{A}}$ needs to be rescaled in order to improve
its condition number. Again, we propose to use either the Jacobi
preconditioner
$\tilde{\mathbf{P}}^{(1)}_{i,j}=\delta_{i,j}\tilde{\mathbf{A}}_{i,j}$
or the row-normalizing preconditioner
$\tilde{\mathbf{P}}^{(2)}_{i,j}=\delta_{i,j}\norm{\tilde{\mathbf{A}}_{i,:}}$.
To illustrate the effectiveness of these two preconditioners, we
present the condition number of $\tilde{\mathbf{A}}$ before and after
applying one of the preconditioners $\tilde{\mathbf{P}}^{(k)}$ for
$k=1,2$, using the configurations described in the previous section.
We observe in \autoref{fig:cond_At} that the preconditioners improve
the condition number of
$\tilde{\mathbf{A}}$ substantially for higher degrees and keep the
condition number stable as the mesh size $h\to 0^+$.
\begin{figure}[ht]
\begin{subfigure}{.45\textwidth}
\centering
\includegraphics[scale=.8]{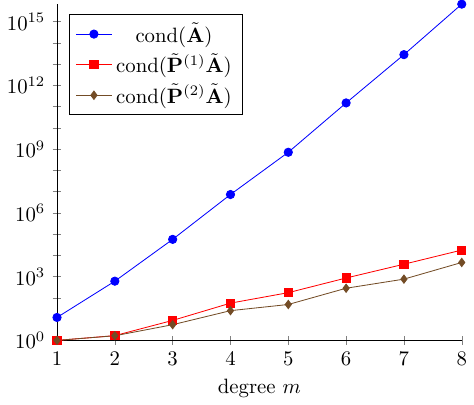}
\caption{Varying the degree with $h=2^{-1}$.}
\end{subfigure}
\begin{subfigure}{.45\textwidth}

\centering
\includegraphics[scale=.8]{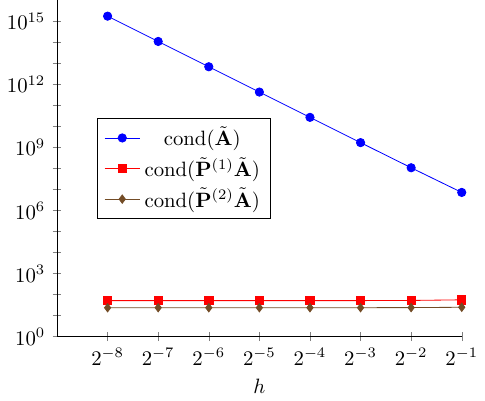}
\caption{Varying the size of the element with $m=4$}

\end{subfigure}
\caption{The condition number of $\tilde{\mathbf{A}}$ before and
after the preconditioning procedure. }
\label{fig:cond_At}
\end{figure}

{ A sample \MATLAB program of this procedure that constructs  the basis
$\{\lambda_i\}_i=\{\hat{\lambda}_i\circ \RG\}_{i}$
is  \verb!GCIFEbasis.m!  made available in
\cite{adjeridMATLABImplementationGeometryConforming}
and can be used as follows:}
\begin{matlab}
phi = GCIFEbasis(x, y, d1, d2, degree, IntElemInfo, CurveDG, pPolyFun, qPolyFun, cm, cp)
\end{matlab}
The input arguments \verb!x, y! are two vectors of the
same length specifying the $x$-$y$ coordinates for evaluating
$\lambda_{\ell}(x, y)$ or its first-order partial derivative,
\verb!d1, d1! are two integers specifying which partial derivative is evaluated,
\verb!CurveDG! is as explained before,
\verb!IntElemInfo! is a structure whose fields
\verb!eta_h, xi_h, xi_mid! are for $\eta_h, \xi_{h}, \ximid$,
\verb!pPolyFun, qPolyFun! are functions for the chosen $p_i(x),
q_i(x)$ polynomials in \eqref{eq:pi-qi_polynomials},
\verb!degree! specifies the degree associated with the underlying polynomial space
$\mathbb{Q}_m$,
\verb!c_m! and \verb!c_p! are columns of $C^-$ and $C^+$, respectively,
corresponding to the same basis function. That is,
$\mathtt{c\_m}=C^{-}_{\ell}$,
and $\mathtt{c\_p}=C^{+}_{\ell}$ for some index $1\le \ell\le (m+1)^2$.
When
\verb!d1 = 0, d2 = 0!, \verb!phi! {is the value of $\lambda_{\ell}(x, y)$,}
when \verb!d1 = 1, d2 = 0!, \verb!phi! is the value the $x$-derivative of
$\lambda_{\ell}$ at $(x, y)$,
when \verb!d1 = 0, d2 = 1!, \verb!phi! is the value of the $y$-derivative of
$\lambda_{\ell}$ at $(x, y)$, and
when \verb!d1 = 1, d2 = 1!, \verb!phi! is the gradient of $\lambda_{\ell}$ at
$(x,y)$. We note  that we included
\verb!GCIFEbasis! here for
educational purposes only to allow the reader to experiment with
individual basis
functions. In practice, it is preferable to assemble the local matrices using a
Generalized Vandermonde matrix.

Next, we discuss computational details for this general construction
procedure. Let
$s, t$ be such that $i = (m+1)t+s+1$. For the entries of matrix
$B^{(0)}$ in \eqref{eq:B_matrices},
by \eqref{eqn:R_def}, we have
\begin{eqnarray*}
B_{k, i}^{(0)} = \int_{\xi_{0, K}}^{\xi_{1, K}} \R_i(0, \xi)
\hp_{k-1}(\xi) d\xi &=& \int_{\xi_{0, K}}^{\xi_{1, K}}
\big(q_t(0) \hp_s(\xi)\big) \hp_{k-1}(\xi) d\xi \approx q_t(0)\sum_{r
= 1}^{\Nq} \xi_h\cw_r p_s(\czeta_r) p_{k-1}(\czeta_r)
\end{eqnarray*}
where, as before, $\czeta_r, \cw_r, ~1 \leq r \leq \Nq$ are the
Gauss-Legendre quadrature (GLQ) nodes and weights on $[-1,1]$. For
the entries of
$B^{(1)}$, we have
\begin{eqnarray*}
B_{k, i}^{(1)} = \int_{\xi_{0, K}}^{\xi_{1, K}} \Big(\partial_\eta
\R_i(0, \xi)\Big) \hp_{k-1}(\xi) d\xi &=&
\int_{\xi_{0, K}}^{\xi_{1, K}}
\Big(\frac{1}{\eta_h}q_t'(0)\hp_s(\xi)\Big) \hp_{k-1}(\xi) d\xi
\approx \frac{q_t'(0)}{\eta_h}\sum_{r = 1}^{\Nq} \xi_h\cw_r
p_s(\czeta_r) p_{k-1}(\czeta_r)
\end{eqnarray*}
Following the same derivation as \eqref{eq:matrix_A_entries}, we can compute the
entries of ${\tilde A}^{(n)}$ as follows:
\begin{align*}
&{\tilde A}_{k, i}^{(n)} = \int_{\xi_{0, K}}^{\xi_{1, K}}
\Big(\frac{\p^n}{\p\eta^n}\L\R_i(0, \xi)\Big) \hp_{k-1}(\xi) d\xi \\
=& \int_{\xi_{0, K}}^{\xi_{1, K}} \left[\frac{\partial^{n+2} \R_i(0,
\xi)}{\partial \eta^{n+2}}
+ \sum_{l=0}^{n}\binom{n}{l}\left(
\frac{\partial^{l}J_0(0,\xi)}{\partial\eta^{l}}
\frac{\partial^{n-l}}{\partial \eta^{n-l}}\left(\frac{\partial^2
\R_i(0, \xi)}{\partial \xi^2}\right)  \right. \right. \\
& \hspace{0.4in} \left. \left. +
\frac{\partial^{l}J_1(0,\xi)}{\partial\eta^{l}}
\frac{\partial^{n-l+1} \R_i(0, \xi)}{\partial \eta^{n-l+1}}  +
\frac{\partial^{l}J_2(0,\xi)}{\partial\eta^{l}}
\frac{\partial^{n-l}}{\partial \eta^{n-l}}\left(\frac{\partial
\R_i(0, \xi)}{\partial \xi}\right)\right) \right]\hp_{k-1}(\xi) d\xi
\nonumber \\
\begin{split}
\approx& \sum_{r = 1}^{\Nq} \xi_h\cw_r \left[\frac{1}{\eta_h^{n+2}}
q_t^{(n+2)}(0) p_s(\czeta_r)
+ \sum_{l=0}^{n}\binom{n}{l}\left(
\frac{\partial^{l}J_0(0,\zeta_r)}{\partial\eta^{l}}
\frac{1}{\eta_h^{n-l}\xi_h^2}q_t^{(n-l)}(0)p_{s}''(\czeta_r)
\right. \right. \\
&\left. \left. +
\frac{\partial^{l}J_1(0,\zeta_r)}{\partial\eta^{l}}
\frac{1}{\eta_h^{n-l+1}} q_t^{(n-l+1)}(0)p_{s}(\czeta_r) +
\frac{\partial^{l}J_2(0,\zeta_r)}{\partial\eta^{l}}
\frac{1}{\eta_h^{n-l}\xi_h}q_t^{(n-l)}(0) p_{s}'(\czeta_r)
\right)  \right]p_{k-1}(\czeta_r).
\end{split} \nonumber
\end{align*}

We note that the procedure for constructing the
matrix ${\tilde A}^{(n)}$ is similar to that for the matrix $A^{(n)}$
given in \eqref{eq:Aj} because
of the similarity between $\R_i(\eta, \xi)$ and $\N_l(\eta, \xi)$.
However, ${\tilde A}^{(n)}$ has $2m + 2$ more columns and the index
$i$ of $\R_i(\eta, \xi)$ is given by the formula $i = (m+1)t+s+1$
which is different from the formula $l = (t-2)(m+1) + (s+1)$ given in
\eqref{eqn:lexicographical} for the index of
$\N_l(\eta, \xi)$. \\

Matrices $B^{(0)}, B^{(1)}$ and ${\tilde A}^{(n)}, ~0 \leq n \leq
m-2,$ can be assembled efficiently in a way very similar to the
assembly of the vector $\mathbf{b}^{(j)}(i)$ and the matrix $A^{(j)}$
discussed in \autoref{sec:method_1} with the data arrays
\verb!pipk, q0, J012! prepared by the programs provided there.
The \MATLAB function \verb!AtnMatrix.m! in
\cite{adjeridMATLABImplementationGeometryConforming} is a sample
program to assemble the matrix
${\tilde \A}$ in \eqref{eq:vector_coeff_eq} (or
\eqref{eq:matrix_coeff_eq}). It can be used as follows:
\begin{matlab}
At = AtMatrix(degree, J012, pipk, q0, eta_h, xi_h, cw)
\end{matlab}
All the input arguments are the same as described in
\autoref{sec:method_1}, and the output
argument \verb!At! is the matrix ${\tilde A}$. 

The GC-IFE basis functions in the general form
\eqref{eq:basisFun_lambda_j} also streamline the assembly of the
related local matrices arising in the implementation of GC-IFE
methods for solving interface problems. To convey the essential ideas
concisely, we will discuss the assembly of the local mass matrix on
an interface element $K$, with the understanding that analogous
procedures apply to the construction of other types of local matrices.

With a basis $\hB = \{\hlambda_j\}_{j=1}^{(m+1)^2}$ for the IFE space
$\hV^m_{\beta}(\hK_F)$ on the Frenet fictitious element $\hK_F$,
according to \cite{adjeridHighOrderGeometry2024}, we know that
$\{\lambda_j = \hlambda_j \circ R_\Gamma\}_{j=1}^{(m+1)^2}$ is
a basis for the local GC-IFE space $\V^m_{\beta}(K)$ on an interface
element $K$. Then, the local mass matrix $M = (M_{i,j})_{i, j
= 1}^{(m+1)^2}$ associated with this basis of $\V^m_{\beta}(K)$ is
\begin{equation}
M_{i,j}=\int_K \lambda_i\lambda_j dxdy
= \int_K (\hat{\lambda}_i\circ
\RG)(\hat{\lambda}_j\circ\RG)dxdy,\qquad 1\le i,j\le (m+1)^2.
\label{eq:mass_Mij}
\end{equation}
First, because the GC-IFE functions are defined piecewise according
to the interface, the integration for
$M_{i,j}$ should be carried out as the summation of the integrations
on two sub-elements $K^s = K \cap \Omega^s,~s = -, +$. Second,
numerical quadrature is inevitable for preparing the entries of the
mass matrix because, in general, the underlying basis functions
$\lambda_j, ~1 \leq j \leq (m+1)^2$ are non-polynomials and
the sub-elements $K^s, s = -, +$ are not polygons.

Let
$\{(\x_k^{\pm},w_k^{\pm})\}_{k=1}^{\mathtt{N}_{\mathtt{q}}^{\pm}}$
be the nodes and weights for the chosen numerical quadrature rules on
$K^{\pm}$ and let $\hx^{\pm}_k =\RG(\x^{\pm}_k), ~1 \leq k \leq
\mathtt{N}_{\mathtt{q}}^\pm$. Then, we have the following formula for
computing the entries of the mass matrix:
\begin{eqnarray*}
M_{i,j} &=& \int_K \lambda_i\lambda_jdxdy = \int_K
(\hat{\lambda}_i\circ \RG)(\hat{\lambda}_j\circ\RG)dxdy \\
&=& \int_{K^-} (\hat{\lambda}_i\circ \RG)(\hat{\lambda}_j\circ\RG)
dxdy+ \int_{K^+} (\hat{\lambda}_i\circ \RG)(\hat{\lambda}_j\circ\RG)dxdy \\
&\approx& {\sum_{s \in \{-,
+\}}\sum_{k=1}^{\mathtt{N}_{\mathtt{q}}^{s}}
w_k^{s}\big(\hlambda_i\circ R_\Gamma(\x_k^s)\big)
\big(\hlambda_j\circ R_\Gamma(\x_k^s)\big) = \sum_{s \in \{-, +\}}
\sum_{k=1}^{\mathtt{N}_{\mathtt{q}}^{s}}
w_k^{s}}\hlambda_i(\hx_k^s)\hlambda_j(\hx_k^s).
\end{eqnarray*}
Furthermore, we let
{
\begin{eqnarray*}
V^\pm = \Big(V_{k,i}^\pm  \Big)_{k,i= 1}^{
\mathtt{N}_{\mathtt{q}}^\pm,(m+1)^2}  =
\Big(\hat{\lambda}_i(\hx_k^{\pm})\Big)_{k,i= 1}^{
\mathtt{N}_{\mathtt{q}}^\pm,(m+1)^2}, ~~~~
L^\pm = \Big(L_{k,i}^\pm  \Big)_{k,i= 1}^{
\mathtt{N}_{\mathtt{q}}^\pm,(m+1)^2} =
\Big(\R_i(\hx_k^{\pm})\Big)_{k,i= 1}^{ \mathtt{N}_{\mathtt{q}}^\pm, (m+1)^2},
\end{eqnarray*}
}
be the generalized Vandermonde {matrices} associated with the basis
$\{\hat{\lambda}_i\}_{i=1}^{(m+1)^2}$
of $\V^m_{\beta}(K)$ and the basis $\{\R_i\}_{i = 1}^{(m+1)^2}$ of
$\mathbb{Q}_m$, respectively. Then,
we have $V^\pm = L^\pm C^{\pm}$ and
\begin{align}
\begin{split}
M = \big(M_{i,j}\big)_{i, j = 1}^{(m+1)^2} \approx& \left(\sum_{s
\in \{-, +\}} \sum_{k=1}^{\mathtt{N}_{\mathtt{q}}^{s}}
w_k^{ s}\hlambda_i(\hx_k^s)\hlambda_j(\hx_k^s)\right)_{i, j =
1}^{(m+1)^2} = \sum_{s \in \{-, +\}} \left(V^{ s } \right)^T W^{ s}
V^{ s} \\[5pt]
=&\left(L^-C^-\right)^T W^-L^- C^- +
\left(L^+C^+\right)^T W^+L^+ C^+
= M_q, \\[5pt]
\text{where} \hspace{0.1in} W^\pm =&
W^{\pm}=\operatorname{diag}(w_1^{\pm},w_2^{\pm},\dots,w_{\mathtt{N}_{\mathtt{q}}^{\pm}}^\pm),
\end{split}\label{eq:MassMatrix_matrixform}
\end{align}
where the subscript $q$ in $M_q$ emphasizes that it is an
approximation to the mass matrix $M$ with a chosen quadrature rule.
We note that matrix $L$ can be easily formed by evaluating
polynomials (not piecewise polynomials) $\R_j(\hx)$ defined by
\eqref{eqn:R_def} at the chosen quadrature nodes. This implies that
once an GC-IFE basis has been constructed in the format defined by
\eqref{eq:basisFun_lambda_j}, i.e., once the coefficient matrices
$C^\pm$ have been determined, the assembly of the mass matrix $M$ can
be efficiently accomplished by straightforward matrix multiplication
according to \eqref{eq:MassMatrix_matrixform}.

To assemble the load vector for a given function $f$, we follow a similar
procedure using the quadrature rules described above. More precisely, let
$\mathbf{f}=\left( \int_K f \lambda_i dxdy  \right)_{i=1}^{(m+1)^2}$. Then,

\begin{equation}
\mathbf{f}= V^-W^-\mathbf{r}^- + V^+W^+\mathbf{r}^+,
\qquad \text{where }\
\mathbf{r}^{\pm}=(f(\x_1^{\pm}),f(\x_2^{\pm})
,\dots,f(\x_{\mathtt{N}_{\mathtt{q}}^\pm}^{\pm}))^T.
\end{equation}
Hence, once the quadrature rule is obtained, and the Vandermonde
matrices $V^{\pm}$ are computed, we can assemble
the local mass matrix and the local load vector directly using matrix-matrix or
matrix-vector multiplications.

A classical way to choose a quadrature rule on
the sub-elements $K^s = K \cap \Omega^s, s = -, +$ of an interface
element $K$ is to partition
them into triangles according to the interface $\Gamma$ such that
each of these triangles has at most one
curved edge, see the illustration in
\autoref{subfig:traditional_quad_triangles} and explanations about such
partition  in \cite[Sec.
4.1.4]{moonImmersedDiscontinuousGalerkin2016}. These triangles are
homeomorphic images of the reference triangle with vertices $(0, 0),
(1, 0), (0, 1)$.
The homeomorphism is either the usual affine mapping if such a
triangle in $K^\pm$ is
formed with straight edges; otherwise, the homeomorphism can be
constructed using the
parametrization $\g(\xi)$ of the interface curve $\Gamma$ through well-known
procedures such as the one presented in
\cite{zlamalCurvedElementsFinite1973}. We then transform the nodes
and weights of a chosen quadrature on the reference triangle to
triangles in $K^\pm$ to
form the quadrature nodes and weight
$\{(\x_k^{\pm},w_k^{\pm})\}_{k=1}^{\mathtt{N}_{\mathtt{q}}^{\pm}}$ on
the sub-elements $K^\pm$. In essence, this constitutes a simple or
a composite quadrature rule on $K^-$ and $K^+$, respectively. When the
sub-elements $K^\pm$ are two curved quadrilaterals, as illustrated in
\autoref{subfig:traditional_quad_rectangles},
we can use the parametrization $\g(\xi)$ of the interface curve
to construct homeomorphic mappings
\cite{koprivaImplementingSpectralMethods2009} between $K^\pm$ and the
reference rectangle with vertices $(0, 0), (1, 0), (1, 1), (0, 1)$.
Then quadrature nodes and weight on $K^\pm$ are formed by
transforming their counterparts in a chosen quadrature rule on the
reference rectangle. On the reference rectangle, we use the
tensor product of the one dimensional Gauss-Legendre quadrature rule, and on
the reference triangle, we use the  Stroud conical product rule
\cite{stroudApproximateCalculationMultiple1971}. Although, other quadrature
rules on the unit triangle may be used such as the ones described in
\cite{lynessModerateDegreeSymmetric1975} and
\cite{dunavantHighDegreeEfficient1985}, we chose to use Stroud's rule since it
can be formed directly from one dimensional rules, which can be calculated and
efficiently using the well known Golub-Welsh algorithm
\cite{golubCalculationGaussQuadrature1969}
or Newton's iteration with a suitable initial guess
\cite{haleFastAccurateComputation2013}. We note that
modern approaches, such as Saye's algorithm
\cite{sayeHighorderQuadratureMethods2015}, can also be employed to
determine the quadrature nodes and weights on the sub-elements
$K^\pm$ according to the interface $\Gamma$.

The \MATLAB function \verb!TraditionalQuadrature.m! in
\cite{adjeridMATLABImplementationGeometryConforming} is
a sample program for generating quadrature nodes and weight on an
interface element $K$, and it can be used as follows:
\begin{matlab}
[xy_m, w_m, xy_p, w_p] = TraditionalQuadrature(elem, CurveDG, xiGuess, n_qp)
\end{matlab}
The input arguments \verb!CurveDG! and \verb!xiGuess! are as
described above, \verb!elem! is a $2 \times 4$ array whose $i$-th
column provides the $x$-$y$ coordinates for the $i$-th vertex of the
interface element $K$, \verb!n_qp! is a positive integer for
determining the number of quadrature points in {each direction} on
each subelement for a
chosen quadrature rule. The output arguments \verb!xy_m, w_m! are
quadrature nodes and weights for $K^- = K \cap \Omega^-$, and
\verb!xy_p, w_p! are quadrature nodes and weights for $K^+ = K \cap
\Omega^+$. We have assumed that the normal vector ${\bf n}(t)$
defined by the parametrization of the interface curve $\Gamma$ points
to $\Omega^+$. The plots in \autoref{fig:traditional_quad} provide
illustrations of quadrature nodes generated inside interface elements.

\begin{figure}[htbp]
\begin{center}
\begin{subfigure}{.335\textwidth}
\includegraphics[width=\textwidth]{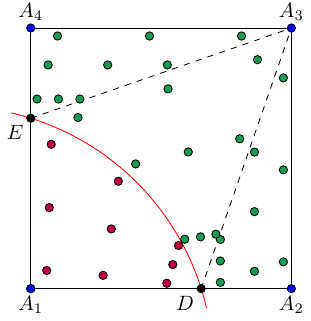}
\caption{Type I element.}
\label{subfig:traditional_quad_triangles}
\end{subfigure}\hspace{0.3in}
\begin{subfigure}{.32\textwidth}
\includegraphics[width=\textwidth]{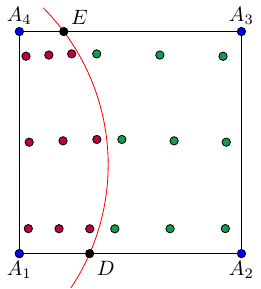}
\caption{Type II element.}
\label{subfig:traditional_quad_rectangles}
\end{subfigure}
\end{center}
\caption{A quadrature rule constructed on an interface element of type I (left)
and type II (right) with \texttt{n\_qp=3}.}
\label{fig:traditional_quad}
\end{figure}

\subsection{Reconstruction of a GC-IFE basis}
\label{subsec:Reconstruction}

The construction procedures for the GC-IFE bases discussed thus far,
including those in \autoref{sec:method_1} and
\autoref{sec:method_general}, share a significant limitation: they focus
solely on enforcing the jump conditions. While this approach aligns
with the primary goal of solving interface problems, it critically
overlooks a crucial aspect of computational performance. These
procedures don't account for how the constructed GC-IFE basis will
actually behave when deployed to generate approximations for the
interface problem. This oversight directly motivates us to introduce
the reconstruction designed specifically to enhance the GC-IFE basis
by incorporating this critical computational aspect, aiming for
improved performance in solving interface problems.

Our approach is to make sure that the mass matrix of the GC-IFE basis
is well conditioned because of the following important factors: (1) The
mass matrix is directly related to the $L^2$ projection onto the
constructed GC-IFE space which gauges the approximation capability of
the GC-IFE functions; (2)
The mass matrix plays an important role in schemes for solving interface
problems, especially those time-dependent interface problems. In
other words, a well-conditioned mass matrix is closely associated
with computational stability when the constructed GC-IFE basis is
used to produce approximations for interface problems. These
considerations are also inspired by prior work in the literature.
In the traditional DG methods, orthogonal
bases are constructed on a reference element, then they are mapped to
the physical element, such as tensor-product Legendre polynomials on
rectangles and hexahedra, and the Proriol-Koorwinder-Dubiner
polynomials on triangles, prisms, and tetrahedra
\cite{dubinerSpectralMethodsTriangles1991,koornwinderTwoVariableAnaloguesClassical1975,proriolFamillePolynomesDeux1957}.
Orthogonality  has also been employed to construct a polynomial
basis with a better stability on polygons used in the virtual element
method (VEM) \cite{berroneOrthogonalPolynomialsBadly2017}.

As mentioned before, the integration in the local mass matrix $M$ on
an interface element needs to be carried out piecewisely according to
the interface geometry so that integration domains are not
necessarily polygonal, and the integrands are not necessarily
polynomials. Hence, from a point of view of practicality and
computational efficiency, it is preferable to adopt a numerical
quadrature to implement the mass matrix $M$.
This means that we will only use the approximate mass matrix $M_q$ in
dealing with interface problems; therefore, our discussion about the
reconstruction will be centered around the approximate mass matrix $M_q$.

The reconstruction starts from a GC-IFE basis $\hlambda_j(\eta,\xi;
C_j^-, C_j^+), ~1 \leq j \leq (m+1)^2$,
in the general form specified by formula
\eqref{eq:basisFun_lambda_j}. Here, we include coefficient
vectors $C_j^-, C_j^+$ in the notation of a basis function
$\hlambda_j(\eta,\xi)$ to emphasize the fact
that it is completely determined by the coefficient vectors $C_j^\pm
= (C_{ij}^\pm)_{i = 1}^{(m+1)^2}$ once the univariate polynomials
$\hp(\xi)$ and $\hq(\eta)$ have been chosen. This basis can be
generated by either the initial construction procedure in
\autoref{sec:method_1} or the one in \autoref{sec:method_general}.
Then, letting $C = (C^-, C^+)$, we will use
\begin{eqnarray*}
\hB(C) = \hB(C^-, C^+)=\big\{\hlambda_j(\eta,\xi; C_j^-, C_j^+), ~1
\leq j \leq (m+1)^2 \big\}
\end{eqnarray*}
to denote the basis of $\hV^m_{\beta}(\hK_F)$. We will use
$M_q(\hB(C))$ instead of $M_q$ to emphasize that
this approximate mass matrix is formed with the basis $\hB(C)$.


First, we note that the coefficient matrices $C^-$ and $C^+$ are
related according to \eqref{eq:matrix_coeff_eq}. Moreover, a
column transformation of $C^-$ and $C^+$ by a nonsingular matrix
$Q$ yields a set of coefficients for another basis of $\hV^m_{\beta}(\hK_F)$, as
stated in the following lemma.
\begin{lemma}\label{lem:change_of_basis}
If $\hB(C)=\hB((C^-,C^+))$ is a basis for $\hV^m_{\beta}(\hK_F)$ and
$Q$ is an invertible $(m+1)^2\times (m+1)^2$ matrix, then $\hB((C^-
Q,C^+ Q))$ is also a basis for $\hV^m_{\beta}(\hK_F)$.
\end{lemma}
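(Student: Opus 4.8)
The plan is to recognize \autoref{lem:change_of_basis} as an elementary change-of-basis statement inside the finite-dimensional space $\hV^m_{\beta}(\hK_F)$, whose dimension is $(m+1)^2$ by \cite{adjeridHighOrderGeometry2024}. The starting observation is that formula \eqref{eq:basisFun_lambda_j} makes each basis function depend \emph{linearly} on its pair of coefficient columns $(C_j^-, C_j^+)$. Writing $\hB((C^-Q, C^+Q)) = \{\mu_j\}_{j=1}^{(m+1)^2}$, the $j$-th column of $C^\pm Q$ is $\sum_{k} Q_{kj}\,C^\pm_{\cdot,k}$, so by this linearity $\mu_j = \sum_{k=1}^{(m+1)^2} Q_{kj}\,\hlambda_k$. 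In other words, a right multiplication of the coefficient matrices by $Q$ is exactly a linear recombination of the original basis functions, with the recombination weights read off from the columns of $Q$.

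First I would check that every $\mu_j$ still lies in $\hV^m_{\beta}(\hK_F)$. Since the original coefficient matrices satisfy the jump-condition equation \eqref{eq:matrix_coeff_eq}, namely $\tilde{\mathbf{A}}\,C^+ = J\,\tilde{\mathbf{A}}\,C^-$, right multiplication by $Q$ gives $\tilde{\mathbf{A}}(C^+Q) = J\,\tilde{\mathbf{A}}(C^-Q)$, so the transformed coefficients again solve \eqref{eq:matrix_coeff_eq}. Hence each $\mu_j$ satisfies the transformed interface jump conditions \eqref{eqns:IFE_conditions} and belongs to $\hV^m_{\beta}(\hK_F)$. Equivalently, this is immediate from the first step, because $\hV^m_{\beta}(\hK_F)$ is a vector space and each $\mu_j$ is a linear combination of its elements $\hlambda_k$.

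Next I would establish that $\{\mu_j\}$ is linearly independent. Suppose $\sum_{j}\alpha_j\mu_j = 0$. Substituting $\mu_j = \sum_k Q_{kj}\hlambda_k$ and interchanging the sums yields $\sum_{k}\big(\sum_j Q_{kj}\alpha_j\big)\hlambda_k = 0$; since $\{\hlambda_k\}$ is a basis and therefore linearly independent, every coefficient vanishes, i.e., $Q\bm{\alpha} = \0$ with $\bm{\alpha} = (\alpha_j)$. As $Q$ is invertible, $\bm{\alpha} = \0$, so the $\mu_j$ are linearly independent. Having $(m+1)^2$ linearly independent functions in the $(m+1)^2$-dimensional space $\hV^m_{\beta}(\hK_F)$, I conclude that $\{\mu_j\}$ is a basis, which is the claim.

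There is no genuinely hard step here, as the statement is pure finite-dimensional linear algebra. The only point needing care is the bookkeeping that ties the \emph{column} transformation $C^\pm \mapsto C^\pm Q$ of the coefficient matrices to the linear recombination $\mu_j = \sum_k Q_{kj}\hlambda_k$ of the functions, together with the observation that the \emph{same} matrix $Q$ multiplies both $C^-$ and $C^+$, so that the recombination is applied consistently on both sides of the Frenet interface $\hGamma_{K_F}$ and the piecewise structure of \eqref{eq:basisFun_lambda_j} is preserved.
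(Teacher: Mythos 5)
Your proposal is correct and follows essentially the same route as the paper's proof: interpret the column transformation $C^\pm \mapsto C^\pm Q$ as a linear recombination of the original basis functions, conclude membership in $\hV^m_{\beta}(\hK_F)$ since jump conditions are preserved under linear combinations, deduce linear independence from the invertibility of $Q$, and finish by counting against the dimension $(m+1)^2$. Your write-up merely fills in more detail (the explicit formula $\mu_j = \sum_k Q_{kj}\hlambda_k$ and the alternative membership check via \eqref{eq:matrix_coeff_eq}) than the paper does.
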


\begin{proof}
First, we note that each  function in
$\hB((C^- Q,C^+ Q))$ is a linear combination of those
in $\hB((C^-,C^+))$. Hence, all functions in $\hB((C^- Q,C^+ Q))$
satisfy the interface jump conditions
\eqref{eqns:IFE_conditions}. Consequently, $\hB((C^- Q,C^+ Q))
\subset \hV^m_{\beta}(\hK_F)$. Moreover,
functions in $\hB((C^- Q,C^+ Q))$ are linearly independent because
$Q$ is nonsingular. Since the dimension of
$\hV^m_{\beta}(\hK_F)$ is $(m+1)^2$, these facts together imply that
$\hB((C^- Q,C^+ Q))$ is a basis of
$\hV^m_{\beta}(\hK_F)$.
\end{proof}

Furthermore, by this lemma and \eqref{eq:MassMatrix_matrixform}, we
have the following
formula for the mass matrix associated with the basis $\hB(\tC) =
\hB((\tC^-, \tC^+)) =
\hB((C^-Q, C^+Q))$ transformed by a nonsingular matrix $Q$:
\begin{align}
\begin{split}
M_q(\hB(\tC)) = & \big(\tC^-\big)^T L^- W^- \big(L^-\big)^T \tC^- +
\big(\tC^+\big)^T L^+ W^+ \big(L^+\big)^T \tC^+ \\
=& \big(C^- Q\big)^T L^- W^- \big(L^-\big)^T \big(C^- Q\big) +
\big(C^+ Q\big)^T L^+ W^+ \big(L^+\big)^T \big(C^+ Q\big) = Q^T M_q(\hB(C)) Q.
\end{split} \label{eqn:mass_congruence_2}
\end{align}
Formula \eqref{eqn:mass_congruence_2} has two implications. First, it
implies that, from a computational perspective where numerical
quadrature is admitted, the mass matrix $M_q$ for the transformed
basis $\hB(\tC)$ is congruent to that of the basis $\hB(C)$. More
significantly, with a given basis $\hB(C)$, this congruence suggests
the possibility for us to find another basis $\hB(\tC)$ such that the
mass matrix $M_q(\hB(\tC))$ is better conditioned than the mass
matrix $M_q(\hB(C))$ that is constructed with the original basis $\hB(C)$.
We have considered the following two approaches to take advantage of
this property of the approximate
mass matrix.

\begin{itemize}
\item
\textit{Approach 1}: This is inspired by the work of Berrone and
Borio on the virtual element method
\cite{berroneOrthogonalPolynomialsBadly2017}, where $Q$ is chosen
such that the mass matrix becomes the identity. Specifically, let
$V_{1}\Lambda V_{1}^{T} = M_q(\hB(C))$ be the singular value
decomposition (SVD) of $M_q(\hB(C))$ and let
$Q_1=V_{1}\Lambda^{-1/2}$. Then, by \eqref{eqn:mass_congruence_2}, we have
\begin{eqnarray*}
M_q(\hB(\tC)) = M_q(\hB(C^-Q_1, C^+Q_1)) =
\Lambda^{-1/2}V^T_{1}\left(V_{1}\Lambda V^T_{1}\right)V_{1}
\Lambda^{-1/2}=I_{(m+1)^2}.
\end{eqnarray*}
This approach modifies the GC-IFE basis $\hB(C)$ to produce a
transformed basis $\hB(\tC)$ that is orthonormal with respect to
the $L^2$ inner product and the chosen quadrature rule. As a
result, the corresponding approximate mass matrix $M_q(\hB(\tC))$
achieves an optimal condition number under the idealized
assumption that all involved computations are exact. While such
an assumption does not hold in finite-precision computations, the
procedure nonetheless confirms the possibility to construct a
GC-IFE basis on an interface element $K$ whose associated mass
matrix has the optimal condition number.

\item
\textit{Approach 2}: This approach is based on a factorization of
$M_q(\hB(C))$ as follows. { If
$\NTq=\mathtt{N}_{\mathtt{q}}^{-} + \mathtt{N}_{\mathtt{q}}^{+}$,  let }
\begin{eqnarray*}
\{\x_r, w_r\}_{r=1}^{{\NTq}} =
\{(\x_k^{-},w_k^{-})\}_{k=1}^{\mathtt{N}_{\mathtt{q}}^{-}} \cup
\{(\x_k^{+},w_k^{+})\}_{k=1}^{\mathtt{N}_{\mathtt{q}}^{+}},
\end{eqnarray*}
then, by \eqref{eq:MassMatrix_matrixform}, we can express $M_q(\hB(C))$ as
\begin{align}
\begin{split}
M_q(\hB(C)) =& \Big ( \sum_{r=1}^{{\NTq}} w_r \hlambda_i(\hx_r; C_i^-,
C_i^+)\hlambda_j(\hx_r; C_j^-, C_j^+) \Big)_{i,j=1}^{(m+1)^2}=
V(\hB(C))^T W V(\hB(C)), \\
\text{with} \hspace{0.1in} W
=&\operatorname{diag}\left(w_1,w_2,\dots,w_{{\NTq}}\right),
\end{split} \label{eq:MassMatrix_matrixform_factored}
\end{align}
and $V(\hB(C))$ is the generalized Vandermonde matrix of the
functions in the basis $\hB(C)$ evaluated at the nodes $\x_r, ~1
\leq r \leq {\NTq}$, i.e.,
\begin{eqnarray*}
V_{r, j}(\hB(C)) = \hlambda_j(\hx_r), ~1 \leq r \leq {\NTq}, ~1
\leq j \leq (m+1)^2.
\end{eqnarray*}
We now factorize $M_q(\hB(C))$ by letting ${\tilde V}(\hB(C))=
\sqrt{W} V(\hB(C))$ so that
$M_q(\hB(C)) = V(\hB(C))^T W V(\hB(C)) = {\tilde V}(\hB(C))^T
{\tilde V}(\hB(C))$. Assuming that
${\NTq} \geq (m+1)^2$, we compute an \textit{economy} SVD of
${\tilde V}(\hB(C))$
such that ${\tilde V}(\hB(C)) = U_{2}\Sigma V_{2}^T$, where $U_{2}$
is ${\NTq}\times (m+1)^2$ and $\Sigma, V_{2}$ are $(m+1)^2\times
(m+1)^2$ matrices, respectively. Letting $Q_2 = V_2\Sigma^{-1}$, we
obtain another basis
$\hB(C^-Q_2, C^+Q_2)$. By \eqref{eqn:mass_congruence_2}, the
approximate mass matrix associated with this
new basis will be such that
\begin{eqnarray*}
M_q(\hB(C^-Q_2, C^+Q_2)) &=& Q_2^T M_q(\hB(C)) Q_2 =
\Sigma^{-1}V_{2}^T {\tilde V}(\hB_{C})^T {\tilde V}(\hB_{(C)})
V_{2}\Sigma^{-1} \\
&=& \Sigma^{-1}V^T_{2} V_{2}\Sigma U^T_{2} U_{2}\Sigma V^T_{2}
V_{2}\Sigma^{-1} = I_{(m+1)^2}.
\end{eqnarray*}

\end{itemize}

We note that these two approaches have identical effect: both methods
produce a GC-IFE basis with an optimally conditioned approximate mass
matrix. Moreover, in case that the eigenvalues of
$M_q(\hB(C))$ are distinct, and with the idealized assumption that
all the involved computations are exact, these two approaches are
mathematically  equivalent, i.e., they produce the same GC-IFE
basis, meaning $\hB(C^-Q_1, C^+Q_1) = \hB(C^-Q_2, C^+Q_2)$, with the
understanding that the corresponding $i$-th basis functions in these
two bases are identical up to sign. To see this, we note that an SVD
of the approximate mass matrix $M_q(\hB(C))$ is established. In
Approach 1, an SVD is directly computed from $M_q(\hB(C))$ such that
$V_{1}\Lambda V_{1}^{T} = M_q(\hB(C))$. In Approach 2, even though an
SVD of the generalized Vandermonde matrix $V(\hB(C))$ is computed,
this leads to another SVD of $M_q(\hB(C))$ as follows:
\begin{align*}
M_q(\hB(C)) =& V(\hB(C))^T W V(\hB(C)) = {\tilde V}(\hB(C))^T {\tilde V}(\hB(C))
= V_2\Sigma U_2^T U_2^T \Sigma V_2^T = V_2 \Sigma^2 V_2^T.
\end{align*}
Comparing this SVD with the previous one $V_{1}\Lambda V_{1}^{T} =
M_q(\hB(C))$ and assuming the diagonal entries in $\Lambda$ and
$\Sigma$ are in descending order, we can see that $\Lambda =
\Sigma^2$. Furthermore,
because the eigenvalues of $M_q(\hB(C))$ are distinct, the
corresponding columns (the left singular vectors
of $M_q(\hB(C))$) in $V_1$ and $V_2$ are the same up to sign.
Therefore, matrices
$Q_1 = V_1\Lambda^{-1/2}$ and $Q_2 = V_2\Sigma^{-1}$ differ only by
signs in their corresponding columns, and this leads to the
conclusion that $\hB(C^-Q_1, C^+Q_1) = \hB(C^-Q_2, C^+Q_2)$ with the
understanding that the corresponding $i$-th basis functions are
identical up to sign. However, when implemented on computers, these
two approaches perform
distinctively, especially for higher degree polynomials. This
discrepancy arises from the fact $\Lambda = \Sigma^2$, which implies
the condition number of $M_q(\hB(C))$ is the square of the condition
number of ${\tilde V}(\hB(C))$. As a result, matrix $M_q(\hB(C))$ can
be significantly more ill-conditioned than ${\tilde V}(\hB(C))$.
Hence, with inexact computations on a computer, the accuracy of the
numerically produced singular values (the diagonal entries of
$\Lambda$) and singular vectors (columns of $V_1$ of $M_q(\hB(C))$ is
limited by the machine precision and the condition number of
$M_q(\hB(C))$. Consequently, the transformation matrix $Q_1$ computed
in Approach 1 by a machine may deviate substantially from the exact
$Q_1$. On the other hand, since the condition number of ${
\tilde V}(\hB(C))$ is smaller, its numerically produced SVD is less
susceptible to the round-off errors on a computer. Thus, the
transformation matrix $Q_2$ based on this SVD in Approach 2 is more
reliable than $Q_1$ in Approach 1 in actual computations. This results
in the approximate mass matrix for the GC-IFE basis produced by
Approach 2 having a superior condition number compared to its
counterpart from Approach 1.

We can also numerically observe the similarities and differences
between the two approaches described above by a simple example where
the interface is the unit circle and the interface element is
$\frac{1}{\sqrt{2}}\left(1+\frac{h}{2}[-1,1]^2\right)$ with diameter
$h=\frac{1}{4}$. First, we construct an initial basis $\B(C^-,C^+)$
according to the procedures in either \autoref{sec:method_1}
or \autoref{sec:method_general} with
$(\beta^-,\beta^+)=(1,10^3)$. We construct the mass matrix $M_q^{(0)}$
associated with this basis. Then, we use this initial basis to
reconstruct two bases $\B(C^-Q_1,C^+Q_1)$ and $\B(C^-Q_2,C^-Q_2)$ by
Approach 1 and Approach 2, and we further construct the mass matrices
$M_q^{(1)}$ and $M_q^{(2)}$ associated with these bases,
respectively. The condition number for these mass matrices are listed
in \autoref{table:cond_M_M1_M2} for degrees $m=1,2,\dots,10$. The
data in this table indicates that the two approaches lead to similar
results for lower degrees $1\le m\le 7$ as expected. However, the
data in \autoref{table:cond_M_M1_M2} demonstrate that Approach 1
suffers from numerical instabilities for $m\ge 8$ due to the
large condition number of $M_q^{(0)}$, which results in $M_q^{(1)}\ne
I$. On the other hand, the corresponding data in
\autoref{table:cond_M_M1_M2} clearly indicates that  Approach 2
is more robust. In conclusion, both theoretical
consideration and numerical
corroboration lead us to recommend Approach 2.
\begin{table}[htbp]
\begin{equation}
\begin{array}{
|c||c|c|c|}\hline
m & \operatorname{cond}(M_q^{(0)}) &
\operatorname{cond}(M_q^{(1)}) & \operatorname{cond}(M_q^{(2)}) \\\hline
1  & 2.3066\mathrm{E+}02 & 1.0000              & 1.0000 \\
2  & 6.4918\mathrm{E+}03 & 1.0000              & 1.0000 \\
3  & 1.7175\mathrm{E+}05 & 1.0000              & 1.0000 \\
4  & 9.5642\mathrm{E+}06 & 1.0000              & 1.0000 \\
5  & 7.1767\mathrm{E+}08 & 1.0000              & 1.0000 \\
6  & 1.0426\mathrm{E+}11 & 1.0000              & 1.0000 \\
7  & 4.2865\mathrm{E+}12 & 1.0000              & 1.0000 \\
8  & 3.4601\mathrm{E+}15 & 1.5355              & 1.0000 \\
9  & 1.6926\mathrm{E+}17 & 1.8664\mathrm{E+}01 & 1.0000 \\
10 & 1.8349\mathrm{E+}20 & 8.4148\mathrm{E+}04 & 1.0000 \\\hline
\end{array}
\notag
\end{equation}
\caption{The condition number of the mass matrix before and after
reconstructing the basis.}
\label{table:cond_M_M1_M2}
\end{table}
\section{A numerical example for illustrating key computations}
\label{sec:NumExample}

In this section, we present a simple program that illustrates the steps
described in the previous section. The purpose of this program is to construct
the IFE basis functions on each interface element, and compute the $L^2$
projection of a given function $u$ onto the global DG space. Then, the $L^2$
norm of the projection error is calculated. A similar approach can be
followed to
calculate the solution to an elliptic or hyperbolic interface problem using an
interior penalty formulation
\cite{adjeridHighOrderGeometry2024,meghaichiHigherOrderImmersed2024}. Here, we
only present the steps pertinent to the Frenet IFE spaces and the computations
associated with  them.
For the full
version of the program, we refer the reader to
\cite{adjeridMATLABImplementationGeometryConforming}.

For simplicity, we consider Cartesian meshes having $\mathtt{n}
\times \mathtt{n}$ elements on
$[-1,1]^2$. Assuming that the geometric information of the interface is
specified in \verb!CurveDG!, the following code snippet generates a uniform
mesh, identifies the interface elements by labeling each  element $K$ with $0$
if $K\subset \Omega^-$, $1$ if $K\subset \Omega^+$, and $2$ if $K$ is
an interface
element, and collects the geometric information of each interface element.\\

\noindent
\begin{minipage}{\linewidth}
\begin{matlabblock}
mesh=uniform_rectangle_mesh_generator([-1, 1], [-1, 1], n+1, n+1);
typelist=ElemTypeList(mesh, CurveDG);
IntElemList=find(typelist==2);
xiGuessList=xiInitGuess(mesh, IntElemList, CurveDG);
IntElemInfoList=IntElemInfoLister(mesh, IntElemList, xiGuessList, CurveDG);
\end{matlabblock}
\end{minipage}

Then a basis of each local Frenet IFE space is constructed on each interface
element and stored as two three-dimensional arrays. The first array stores the
matrices $C^-$, and the second stores the matrices $C^+$. As discussed
previously, the construction of the bases can be made efficient by {caching} the
quantities shown in blue in \eqref{eq:matrix_A_entries} and reusing them on each
interface element.\\

\noindent
\begin{minipage}{\linewidth}
\begin{matlabblock}
[Cm_list, Cp_list]=ImmFESpaces(mesh, CurveDG, betas, degree, IntElemInfoList,...
IntElemList, pPolyFun, qPolyFun,"Vandermonde");
\end{matlabblock}
\end{minipage}

The last argument \verb!"Vandermonde"! indicates that Approach 2 was used
to orthogonalize the basis. Alternatively, one
can replace this argument with \verb!"Mass"! to use Approach 1, or
\verb!"None"! to
skip the reconstruction step.
Now, we loop over all elements to compute the $L^2$ projection of a given
function and the $L^2$ norm of the projection error. For conciseness,
we only show the
portion of the code that deals with interface elements. The portion for
non-interface elements is standard and can be found in many textbooks on the DG
method, \textit{e.g.},
\cite{hesthavenNodalDiscontinuousGalerkin2008}, or in the full
program.

\begin{matlabblock}
N_elements=size(mesh.e, 2);
L2_errors=zeros(N_elements, 1);

for k=1:N_elements
    elem=mesh.p(:, mesh.e(:, k));
    if typelist(k)==2
        kk=find(IntElemList==k);
        IntElemInfo=IntElemInfoList(kk);
        xi_mid=IntElemInfo.xi_mid;
        [xy_m, w_m, xy_p, w_p]=TraditionalQuadrature(elem, CurveDG, xi_mid, degree+1);
        etaxi_m=FrenetRmap(xy_m, CurveDG, xi_mid, 10);
        etaxi_p=FrenetRmap(xy_p, CurveDG, xi_mid, 10);
        L_m=ImmVandermondeMat(etaxi_m, degree, pPolyFun, qPolyFun, IntElemInfo);
        L_p=ImmVandermondeMat(etaxi_p, degree, pPolyFun, qPolyFun, IntElemInfo);
        C_m=Cm_list(:, :, kk);
        C_p=Cp_list(:, :, kk);
        M=C_m'*L_m'*(w_m.*L_m*C_m)+C_p'*L_p'*(w_p.*L_p*C_p);
        u_m=u(xy_m);
        u_p=u(xy_p);
        b_vect=(L_m*C_m)'*(w_m.*u_m(:))+(L_p*C_p)'*(w_p.*u_p(:));
        uh=M\b_vect;
        L2_errors(k)=sqrt(dot((L_m*C_m*uh-u_m(:)).^2, w_m)+dot((L_p*C_p*uh-u_p(:)).^2, w_p));
    else 
        `\mlplaceholder{Perform a tradtional $L^2$ projection on non-interface elements (see the full script).}`
    end

end
\end{matlabblock}

The same process can be used to numerically test the approximation
capabilities of the local IFE
space discussed in \cite{adjeridHighOrderGeometry2024}. For instance, consider
the domain $\O=[-1,1]^2$ split by the interface $x^2+y^2=r_0^2$ where
$r_0=\frac{1}{\sqrt{3}}$ into
$\O^+=\{(x,y)\in \O \mid x^2+y^2>r_0^2\}$, and $\O^- =\{(x,y)\in \O \mid
x^2+y^2<r_0^2\}$. Consider, the function $u:\O \to \mathbb{R}$
\[
u(x,y)=
\begin{cases}
\frac{1}{\beta^+} \cos(2\pi r^2),& r>r_0,\\
\frac{1}{\beta^-} \cos(2\pi r^2)+ \cos(2\pi r_0^2)\left(
\frac{1}{\beta^+}-\frac{1}{\beta^-}
\right)  ,& r<r_0,
\end{cases}
\qquad r=\sqrt{x^2+y^2},
\]
where $(\beta^-,\beta^+)=(1000,1)$. In the construction, we follow Approach 2
described in \autoref{subsec:Reconstruction} to obtain an orthonormal
basis. After the construction, we calculate the $L^2$ projection of $u$ onto the
local space on each element, and report the global $L^2$ norm of the
projection error
$u-P_hu$ in \autoref{table:projection_error}, where we observe that
the norm of  the projection error decays at an optimal rate under
mesh refinement, as
shown in \cite{adjeridHighOrderGeometry2024}.
\begin{table}[ht]
\[
\begin{array}{|c||c|c||c|c||c|c||c|c|}
\hline
& \multicolumn{2}{|c||}{m=1}
& \multicolumn{2}{|c||}{m=2}
& \multicolumn{2}{|c||}{m=3}
& \multicolumn{2}{|c|}{m=4}
\\ \hline
N&
\norm{u-P_h u}_{L^2(\O)} & \text{rate}&
\norm{u-P_h u}_{L^2(\O)} & \text{rate}&
\norm{u-P_h u}_{L^2(\O)} & \text{rate}&
\norm{u-P_h u}_{L^2(\O)} & \text{rate}\\
\hline
16 & 8.1386\mathrm{E-}02 & \text{---} & 9.2883\mathrm{E-}03 &
\text{---} & 8.8048\mathrm{E-}04 & \text{---} & 7.5479\mathrm{E-}05 &
\text{---} \\
32 & 2.0798\mathrm{E-}02 & 1.9683 & 1.1914\mathrm{E-}03 & 2.9627 &
5.7397\mathrm{E-}05 & 3.9392 & 2.3860\mathrm{E-}06 & 4.9834 \\
64 & 5.2312\mathrm{E-}03 & 1.9912 & 1.5034\mathrm{E-}04 & 2.9864 &
3.6213\mathrm{E-}06 & 3.9864 & 7.4910\mathrm{E-}08 & 4.9933 \\
128 & 1.3098\mathrm{E-}03 & 1.9978 & 1.8843\mathrm{E-}05 & 2.9961 &
2.2688\mathrm{E-}07 & 3.9965 & 2.3437\mathrm{E-}09 & 4.9983 \\
256 & 3.2756\mathrm{E-}04 & 1.9995 & 2.3568\mathrm{E-}06 & 2.9991 &
1.4189\mathrm{E-}08 & 3.9991 & 7.3261\mathrm{E-}11 & 4.9996 \\\hline
\end{array}
\]
\caption{$L^2$ projection errors and convergence rate for $m=1,2,3$ and $4$.}
\label{table:projection_error}
\end{table}

\section{Reproducibility}
The source code for reproducing the results shown in \autoref{fig:cond_A},
\autoref{fig:cond_At}, \autoref{table:cond_M_M1_M2} and
\autoref{table:projection_error} can be found in
\cite{adjeridMATLABImplementationGeometryConforming}.

\section{Conclusion}\label{sec:conclusion} 
In this work, we presented a detailed discussion of the construction
of the local geometry-conforming
immersed function element basis functions, and proposed a new method for
constructing orthonormal bases using the  SVD decomposition of the
generalized Vandermonde matrix. The theoretical procedures in this article are
accompanied by \MATLAB scripts to illustrate the key steps in the
construction and
the implementation of the basis functions, and to facilitate the adoption of the
GC-IFE  method.



\bibliographystyle{plainurl}

\end{document}